\documentclass[11pt]{article}
\usepackage{amsmath, amscd, amssymb, latexsym, epsfig, color, amsthm,tikz, url}
\usepackage[all]{xy}
\usepackage{verbatim}
\usetikzlibrary{arrows,shapes,positioning,calc,decorations.pathreplacing}
\numberwithin{equation}{section}

\newtheorem{theorem}{Theorem}[section]
\newtheorem{proposition}[theorem]{Proposition}

\newtheorem{corollary}[theorem]{Corollary}
\newtheorem{conjecture}[theorem]{Conjecture}
\newtheorem{lemma}[theorem]{Lemma}

\newtheorem{problem}[theorem]{Problem}

\theoremstyle{definition}
\newtheorem{definition}[theorem]{Definition}
\newtheorem{example}[theorem]{Example}

\newtheorem{remark}[theorem]{Remark}

\DeclareMathOperator\lk{\mathrm{lk}}

\DeclareMathOperator\st{\mathrm{st}}

\newcommand{\R}{{\mathbb R}}

\newcommand{\kk}{{\mathbb K}}
\newcommand{\Bier}{\operatorname{Bier}}

\title{Sharp Bounds on the Independence Number of Simplicial Spheres}
\author{
	Jes\'us A. De Loera\thanks{Department of Mathematics, University of
	California, Davis, One Shields Avenue, Davis, CA 95616-8633, USA. Email:
	\texttt{deloera@math.ucdavis.edu}.}
	\qquad
	Ethan X. Fang\thanks{Department of Biostatistics \& Bioinformatics, Duke
	University, Durham, NC 27710, USA. Email: \texttt{ethan.fang@duke.edu}.}
	\qquad
	Junwei Lu\thanks{Department of Biostatistics, Harvard T.H. Chan School of
	Public Health, Boston, MA 02115, USA. Email: \texttt{junweilu@hsph.harvard.edu}.}
	\\[1ex]
	Tingzhou Wei\thanks{Department of Statistical Science, Duke University,
	Durham, NC 27708-0251, USA. Email: \texttt{tingzhou.wei@duke.edu}.}
	\qquad
	Hailun Zheng\thanks{Department of Mathematics, University of Hawai`i at
	M\={a}noa, 2565 McCarthy Mall, Honolulu, HI 96822, USA. Email:
	\texttt{hailunz@hawaii.edu}. }
}
\date{}
\begin{document}
	\maketitle
	\begin{abstract}
		We study the maximum size of an independent set in the graph of a simplicial sphere. Let $\beta(d,n)$ denote this maximum over all simplicial $(d-1)$-spheres on $n$ vertices, and let $\alpha(d,n)$ denote the maximum restricted to flag $(d-1)$-spheres. For every fixed $d\geq4$, we prove $\beta(d,n)=n-\Theta(n^{1/\lfloor d/2\rfloor})$. For flag spheres, we show $\alpha(d,n)\geq n-4\sqrt n+O(1)$ for all $d\geq4$ and determine the correct asymptotic order $\alpha(d,n)=n-\Theta(\sqrt n)$ for dimensions $d=4,5$. We also investigate the independence sets of Bier spheres and show that, in contrast to our other results, for this very large family of spheres, the independence number cannot be larger than $\left\lfloor\frac{n}{2}\right\rfloor.$
	\end{abstract}
\section{Introduction}
The independence number of a graph, the size of the largest set of pairwise nonadjacent vertices, is one of the most fundamental parameters studied in combinatorics and graph theory. The independence 
number of a graph ($\alpha(G)$ denotes the independence number of a graph $G$) is very important in graph theory as it is directly related to coloring, packing, covering, and clique problems. More generally, if $\Delta$ is a simplicial complex, we define its independence number of $\Delta$, $\alpha(\Delta)$, as the independence number of its $1$-skeleton $\Delta^{(1)}$. This paper focuses on the independence number of simplicial spheres.

Our research is justified for its impact in geometric, topological, and algebraic combinatorics.  First, it has interesting connections to Stanley-Reisner theory and complexes arising from graph theory \cite{Stanley1996,Kozlov2008,Jonsson2008}. Indeed, the \emph{independent complex} of any graph or complex $\Delta$  is the simplicial complex whose simplices are the independent sets of $\Delta$ and it has been studied in geometric-topological combinatorics by many authors \cite{EhrenborgHetyei2006,Kozlov2008}. Moreover, for a subset of vertices $W\subseteq V(\Delta)$, 
\[
    \Delta[W]=\{F\in\Delta:F\subseteq W\}
\]
denote the \emph{induced subcomplex} on $W$.
A set $W\subseteq V(\Delta)$ is independent in $\Delta^{(1)}$ if and only if
$\Delta[W]$ is a zero-dimensional complex. Now, suppose that $W$ is independent set of vertices and $|W|=r$.  Then
\[
    \widetilde H_0(\Delta[W];\kk)\cong \kk^{\,r-1}.
\]
Hochster's formula for the Stanley--Reisner ring $\kk[\Delta]$
\cite{Hochster1977,Stanley1996} states that
\[
 \beta_{p,q}\bigl(\kk[\Delta]\bigr)
 =\sum_{\substack{U\subseteq V(\Delta)\\ |U|=q}}
   \dim_{\kk}\widetilde H_{q-p-1}\bigl(\Delta[U];\kk\bigr).
\]
Taking $U=W$, $q=r$, and $p=r-1$ gives
\[
    \beta_{r-1,r}\bigl(\kk[\Delta]\bigr)\ge r-1.
\]
More strongly, every $s$-element subset of $W$ is independent, and hence
\[
    \beta_{s-1,s}\bigl(\kk[\Delta]\bigr)
       \ge \binom{r}{s}(s-1),
       \qquad 2\le s\le r.
\]
Thus, a large independent set forces a long nonzero linear strand in the
minimal free resolution of $\kk[\Delta]$.  Graph sparsity therefore has a
direct algebraic signature.  This implication is one-way: nonzero Betti
numbers can also arise from disconnected induced subcomplexes that are not
discrete.

Second, as mentioned earlier, the vertices of a given color form an independent set. If $N=|V(\Delta)|$, every proper coloring partitions the vertex set into independent color classes. Hence
\[
    \chi\bigl(\Delta^{(1)}\bigr)
    \ge \left\lceil\frac{N}{\alpha(\Delta)}\right\rceil.
\]
A pure $(d-1)$-dimensional simplicial complex is \emph{balanced} when its graph is properly $d$-colorable, balanced complexes have been extensively studied (see \cite{KleeNovik2016} and references therein). Balancedness therefore requires
\[
    \alpha(\Delta)\ge \left\lceil\frac{N}{d}\right\rceil.
\]
The independence number consequently detects obstructions to balancedness
and measures how far a complex may be from admitting a dimension-minimal
vertex coloring.  This numerical condition is necessary, but it is not by
itself sufficient for balancedness.

Finally, the independence number is particularly important to investigate for structured classes such as triangulated spheres and polytopal complexes, i.e., $\Delta=\partial P$ is the boundary complex of a simplicial polytope or sphere
$P$.  Under duality/polarity, a vertex $v$ of $P$ corresponds to a facet $F_v$ of the dual simple polytope $P^*$.  Two vertices of $P$ are adjacent precisely when the corresponding facets of $P^*$ meet. Therefore an independent set in
$\Delta^{(1)}$ corresponds to a collection of pairwise disjoint facets of
$P^*$.  In the polytopal setting, $\alpha(\Delta)$ is thus a natural
facet-packing invariant. 

In the setting of geometric-algebraic combinatorics, Chudnovsky and Nevo \cite{CNevo} proposed studying the independence number of simplicial spheres.
Recall that a simplicial complex is {\em flag} if every minimal nonface has cardinality two,
or equivalently, it is the clique complex of its graph \cite{Kozlov2008}.
Flag complexes arise naturally throughout combinatorics: the barycentric subdivision of any simplicial complex and the matching complex of any graph are classical examples. Geometrically, flag complexes arise naturally in Coxeter theory, cubical complexes, and criteria for nonpositive curvature \cite{KleeNovik2013}. Flag complexes are important because their entire higher-dimensional structure is determined by their 1-skeleton: whenever a collection of vertices is pairwise adjacent, it must form a face. Note also independence complexes of graphs are flag complexes, since $I(G)$ is the clique complex of the complement graph $\overline G$. Algebraically, a complex is flag precisely when its Stanley–Reisner ideal is generated by square-free quadratic monomials, connecting flagness with quadratic algebras and syzygies \cite{Stanley1996}.  In this paper we study the following two closely related problems.

\begin{problem}\label{prob:alphabeta}
	Determine how large an independent set a simplicial $(d-1)$-sphere on $n$ vertices can be. More specifically, determine
	\begin{align*}
		\alpha(d,n) &= \max\{\alpha(G) : |V(G)|=n,\ \text{the clique complex of $G$ is a flag
			$(d-1)$-sphere}\},\\
		\beta(d,n)  &= \max\{\alpha(G) : |V(G)|=n,\ G\text{ is the graph of a simplicial
			$(d-1)$-sphere}\}.
	\end{align*}
\end{problem}
In particular, $\alpha(d,n)\leq \beta(d,n)$.

Without the flag condition, large independent sets arise from stellar
subdivisions \cite{CNevo}: stellar subdividing a suitable collection of facets
of the boundary of a cyclic $d$-polytope yields
$\beta(d,n)\geq n-O(n^{1/\lfloor d/2\rfloor})$.
In this paper we prove the matching upper bound
for $\beta(d, n)$:

\begin{theorem}\label{thm:beta}
	For every fixed $d\geq 4$, as $n\to\infty$,
	\[
	\beta(d,n)=n-\Theta(n^{1/\lfloor d/2\rfloor}).
	\]
\end{theorem}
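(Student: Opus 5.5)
The lower bound $\beta(d,n)\geq n-O(n^{1/\lfloor d/2\rfloor})$ is the stellar subdivision construction of \cite{CNevo} recalled just before the statement, so only the upper bound needs proof. Concretely, I want a constant $c=c(d)>0$ such that whenever $\Delta$ is a simplicial $(d-1)$-sphere on $n$ vertices with an independent set $I$, the complement $U=V(\Delta)\setminus I$ has $m=|U|\geq c\,n^{1/\lfloor d/2\rfloor}$. The plan is to charge every vertex of $I$ to ridges of $\Delta$ lying entirely inside $U$. Then I bound the number of such ridges by an upper-bound-type estimate that depends only on $m$.

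\emph{Step 1 (charging).} Since $I$ is independent, every face of $\Delta$ contains at most one vertex of $I$. So for $v\in I$ the link $\lk(v)$ is a $(d-2)$-sphere with all vertices in $U$, and it has at least $d$ facets. Each facet $R$ of $\lk(v)$ is a ridge of $\Delta$ with $R\subseteq U$. A ridge lies in exactly two facets of $\Delta$, so $R$ belongs to $\lk(v)$ for at most two vertices $v\in I$. This gives
\[
d\,|I|\;\le\;\sum_{v\in I} f_{d-2}(\lk v)\;\le\;2\,f_{d-2}(\Delta[U]).
\]

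\emph{Step 2 (upper bound in terms of $m$).} I need to show $f_{d-2}(\Delta[U])=O(m^{\lfloor d/2\rfloor})$. Combined with Step 1 this gives $n-m=|I|=O(m^{\lfloor d/2\rfloor})$, hence $m=\Omega(n^{1/\lfloor d/2\rfloor})$, which is the claim. For this step I use the structure of $\Delta[U]$: it is $\Delta$ with the open stars of the vertices of $I$ removed, and these open stars are pairwise disjoint. Hence $\Delta[U]$ is a $(d-1)$-dimensional homology manifold with boundary, on $m$ vertices, embedded in a $(d-1)$-sphere. I would first try to invoke an upper bound theorem for such complexes, namely Novik's and Kalai's bounds for homology manifolds with boundary, or balls, which give $f_i=O(m^{\lfloor d/2\rfloor})$.

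A more self-contained route goes through face numbers. Write
\[
f(\Delta)=f(\Delta[U])+\sum_{v\in I}f(\text{open star of }v).
\]
Then apply the Dehn--Sommerville relations and the nonnegativity of the $g$-vector to $\Delta$, together with the same relations for each link $\lk(v)$. The aim is to show that the contributions that grow with $|I|$ cancel, leaving only the low-dimensional faces of $\Delta[U]$. Those are bounded by $\binom{m}{\le\lfloor d/2\rfloor}$, exactly as in the proof of the upper bound theorem.

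\emph{Main obstacle.} The hard step is Step 2, that is, controlling the number of ridges of the induced subcomplex $\Delta[U]$ purely in terms of $m$ and not $n$. The faces of dimension at most $\lfloor d/2\rfloor-1$ are trivially $O(m^{\lfloor d/2\rfloor})$. The work is in propagating this bound up to dimension $d-2$ for a manifold with boundary, where the boundary pieces $\lk(v)$ may share lower-dimensional faces. If the cited upper bound theorems do not apply directly, I would glue a cone over each $\lk(v)$ back in. That recovers $\Delta$, and I would then run the short $h$-vector / Dehn--Sommerville computation relative to these cones.
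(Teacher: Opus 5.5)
Your architecture matches the paper's. Step 1 is exactly its ridge count $2f_{d-2}(\Delta\setminus I)=d\,f_{d-1}(\Delta\setminus I)+\sum_{v\in I}f_{d-2}(\lk(v,\Delta))$. Gluing cones over the links back in is its decomposition $\Delta=(\Delta\setminus I)\cup\bigcup_{v\in I}\st(v,\Delta)$. But there is a genuine gap: Step 2, which you rightly call the whole difficulty, is left unproved, and the route you put first cannot work. In general $\Delta[U]$ is \emph{not} a homology manifold with boundary. Suppose $v_1,v_2\in I$ have a common neighbour $w$. Then $\lk(w,\Delta[U])$ is the $(d-2)$-sphere $\lk(w,\Delta)$ with the pairwise nonadjacent vertices of $I\cap V(\lk(w,\Delta))$ deleted. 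By Alexander duality its $\widetilde H_{d-3}$ has dimension $|I\cap V(\lk(w,\Delta))|-1\geq1$, so it is neither a homology sphere nor a homology ball. This is forced in exactly your regime: each $v\in I$ has at least $d$ neighbours in $U$, so once $d|I|>|U|$ some vertex of $U$ has two neighbours in $I$. So no upper bound theorem for balls or manifolds with boundary can be quoted, and disjointness of the open stars is not enough.

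What closes Step 2 is an explicit identity, and the link contributions do not simply cancel. By Lemma~\ref{lm:f-h} and Dehn--Sommerville, $f_{d-2}(\Delta)=\Phi(\Delta):=\sum_{i=0}^{\lfloor d/2\rfloor}c_if_{i-1}(\Delta)$ for fixed integers $c_i$. This is an exact identity; no inequality (in particular no $g$-nonnegativity) is needed for $\Delta$ itself. Substitute $f_{i-1}(\Delta)=f_{i-1}(\Delta[U])+\sum_{v\in I}f_{i-2}(\lk(v,\Delta))$ for every $i$, including $i=d-1$. This gives
\[
f_{d-2}(\Delta[U])=\Phi(\Delta[U])-\sum_{v\in I}\ \sum_{j=1}^{\lfloor (d-1)/2\rfloor} j\,g_j\bigl(\lk(v,\Delta)\bigr).
\]
The reason is that the shifted $f$-vector of a link $L$ has ``$h$-numbers'' $h_{j-1}(L)$. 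Lemma~\ref{lm:f-h} applied to the $(d-2)$-sphere $L$ then turns $f_{d-3}(L)-\Phi(\text{shifted }L)$ into $\sum_j j\,g_j(L)$. The remainder is nonpositive only because of $g$-nonnegativity of the \emph{links}: the $g$-theorem in general, or the lower bound theorem when $d\le6$. Hence $f_{d-2}(\Delta[U])\leq\sum_i|c_i|\binom{m}{i}=O(m^{\lfloor d/2\rfloor})$, and your Step 1 finishes the proof. This is precisely the computation in the paper's proof of Lemma~\ref{thm:beta-upper}. The paper keeps the $g$-terms on the right-hand side alongside $\frac12 f_{d-2}(\lk(v,\Delta))$ rather than discarding them.
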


The flag condition was expected to impose a fundamentally stronger constraint on the independence number. Chudnovsky and Nevo proved that
	$\alpha(d,n)=\lfloor n/2\rfloor-(d-2)$ for dimensions $d=2,3$. They conjectured that this formula holds for every $d\geq 2$ \cite{CNevo}. Shah recently disproved their conjecture \cite{Shah}: for every $d\geq 4$, he constructed arbitrarily large flag $(d-1)$-spheres on $n$ vertices with independence number at least
	\[
	n-\frac{c n}{(\log n)^{\lfloor d/2\rfloor-1}},
	\]
	where $c=c(d)>0$. In each dimension, his construction starts with a
	neighborly cubical sphere of the corresponding dimension and replaces
	each cubical facet by a compatible flag triangulation after adding an
	interior vertex.

In this paper we further improve the lower bound and determine the correct asymptotic order of $\alpha(d,n)$ for $d=4$ and $d=5$. Our approach is the flag analog of the stellar subdivision construction above: instead of subdividing facets, we perform edge subdivisions (an operation that preserves flagness) on carefully chosen edges of a flag sphere. By a result of Zheng \cite{Z-flag} (see also \cite{Adam-Hladky} for sufficiently large odd-dimensional flag manifolds), the face numbers of flag $3$-spheres are simultaneously maximized by the join of two cycles of roughly equal length, suggesting that the natural flag substitute for the cyclic polytope is such a join. Our construction confirms this intuition.

\begin{theorem}\label{thm:lower}
	For all $d\geq4$,
	\[
	\alpha(d,n)\geq n-4\sqrt n+O(1).
	\]
Furthermore, for $d=4$ and $d=5$,
	\[
	\alpha(d,n)\leq n-\sqrt{12n}+O(1).
	\]
	In particular, $\alpha(4,n)=\alpha(5,n)=n-\Theta(\sqrt{n})$.
\end{theorem}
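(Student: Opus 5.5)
The plan has two halves: an explicit construction for the lower bound, and a face-counting argument via the Dehn--Sommerville relations for the upper bound in $d=4,5$.

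\textbf{Lower bound.} Start from the flag $3$-sphere $C_m * C_m$, the join of two $m$-cycles with vertices $u_0,\dots,u_{m-1}$ and $w_0,\dots,w_{m-1}$, taking $m$ even. Its facets are $\{u_i,u_{i+1},w_j,w_{j+1}\}$, and the link of a cross edge $u_iw_j$ is the $4$-cycle $u_{i-1}w_{j-1}u_{i+1}w_{j+1}$. Choose the $m^2/4$ cross edges $u_iw_j$ with $i,j$ both even, and perform an edge subdivision on each of them. Edge subdivision preserves flagness.

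The new vertex on $u_iw_j$ is adjacent only to $u_i,w_j$ and the vertices of the link of that edge. No two chosen edges lie in a common facet, so the successive subdivisions do not interact. Hence each new vertex is adjacent only to original vertices, and the new vertices form an independent set. This gives $n=2m+m^2/4$ vertices and an independent set of size $m^2/4=n-2m$. Since $m=2\sqrt n+O(1)$, we get $\alpha(4,n)\ge n-4\sqrt n+O(1)$.

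For $d>4$, suspend $d-4$ times, performing the same subdivisions in the suspended complex. The links of the chosen edges now also contain the suspension vertices, but the new vertices are still adjacent only to old vertices. Suspension preserves flagness and adds only $O(1)$ vertices, which yields the bound for all $d\ge4$. Intermediate values of $n$ are handled by subdividing fewer edges.

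\textbf{Upper bound, $d=4$.} Let $I$ be a maximum independent set and $R=V\setminus I$, with $r=|R|$. For $v\in I$, the link $\lk v$ is a flag $2$-sphere on $\deg v\ge 6$ vertices, all lying in $R$. It therefore has $2\deg v-4$ triangles, each giving a facet through $v$. A facet contains at most one vertex of $I$, so these facet sets are disjoint, and
\[
f_3\ge \sum_{v\in I}(2\deg v-4)+f_3(\Delta[R]).
\]
On the other hand, every edge either lies in $R$ or has exactly one endpoint in $I$, so
\[
f_1=e(R)+\sum_{v\in I}\deg v.
\]
A flag $3$-sphere contains no $K_5$, so Turán's theorem gives $e(R)\le 3r^2/8$. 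Plug these into the Dehn--Sommerville relation $f_3=f_1-f_0$ for $3$-spheres. This gives
\[
\sum_{v\in I}(\deg v-4)+f_3(\Delta[R])\le e(R)-n,
\]
so $2(n-r)+n\le e(R)$, that is, $3n\lesssim e(R)$.

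With the Turán bound this only yields $r\gtrsim\sqrt{8n}$. Reaching $\sqrt{12n}$ requires the sharper estimate $e(R)\lesssim r^2/4$ plus lower-order terms. To get it, I would exploit the facets inside $R$ and the structure of links of $R$-edges. The link of an edge $e\subseteq R$ is a cycle in which the $I$-vertices are pairwise nonadjacent. My first attempt is a double count of pairs ($R$-edge, facet containing it), aiming to show that dense parts of $G[R]$ force many facets in $\Delta[R]$, whose contribution to $f_3$ must be paid for on the left-hand side.

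\textbf{Upper bound, $d=5$.} For a flag $4$-sphere, the Dehn--Sommerville relations express $f_3$ and $f_4$ in terms of $f_0,f_1,f_2$, so triangles enter the count. I would instead sum the $d=4$ inequality over links. Each $\lk x$ with $x\in R$ is a flag $3$-sphere whose $I$-vertices form an independent set. Combining the resulting per-link inequalities by double counting, together with the relation $\sum_x f_{i-1}(\lk x)=(i+1)f_i$, should reproduce the same quadratic bound.

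I expect the main obstacle to be exactly this step of improving the $R$-edge bound to obtain the constant $12$, and then transferring it to $d=5$ through the link averaging.
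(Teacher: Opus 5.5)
Your lower-bound construction is correct and is essentially the paper's: the join of two cycles with $O_{d-4}$, subdividing all cross edges between maximum independent sets of the two cycles. Your remark on intermediate $n$ is fine. Your $d=4$ count is also correct. It gives $e(R)\ge 3n-2r$, which is the paper's inequality $f_1(A)\ge f_0(A)+3|I|=n+2|I|$.

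The genuine gap is the complementary bound $e(R)\le \tfrac14 r^2+O(r)$, which you leave open. With Tur\'an you only get $\alpha(4,n)\le n-\sqrt{8n}+O(1)$. That gives the order $\Theta(\sqrt n)$ for $d=4$, but not the stated constant $\sqrt{12}$.

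The missing idea is not an (edge, facet) double count but the link condition in the flag induced subcomplex $A=\Delta[R]$. For each edge $uv$ of $A$,
$\deg_A u+\deg_A v=f_0(\lk(u,A)\cup\lk(v,A))+f_0(\lk(uv,A))\le r+f_0(\lk(uv,A))$.
Summing over the edges of $A$ gives $\sum_{u\in R}\deg_A(u)^2\le r\,e(R)+3f_2(A)$. Cauchy--Schwarz turns this into $4e(R)^2/r\le r\,e(R)+3f_2(A)$.

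It remains to bound $f_2(A)$ linearly in $e(R)$. For $d=4$, each $\lk(u,A)$ is an induced subcomplex of a $2$-sphere, so its graph is planar and $3f_2(A)=\sum_{u\in R}f_1(\lk(u,A))\le 6e(R)$. This yields $4e(R)\le r(r+6)$. Combined with your $e(R)\ge 3n-2r$, it gives $r\ge\sqrt{12n+49}-7$.

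For $d=5$ your proposal is only a heuristic, and both ingredients are missing.

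\begin{itemize}
\item \emph{Lower bound on $e(R)$.} The paper obtains $e(R)\ge 3n-6$ from three facts: the ridge count $2f_3(A)=5f_4(A)+\sum_{v\in I}f_3(\lk v)$; the Dehn--Sommerville formula $f_3=5(f_1-3f_0+6)$ for $4$-spheres; and the bounds $f_3\ge16$, $g_1\ge3$, $g_2\ge2$ for the flag $3$-spheres $\lk v$ (the last via Davis--Okun).
\item \emph{Upper bound on $e(R)$.} It obtains $4e(R)\le r(r+12)$ from the same Cauchy--Schwarz lemma together with $f_2(A)\le 4f_1(A)$. That estimate again uses $f_2=4f_1-10f_0+20$ for the $4$-sphere and $\gamma_2\ge0$ for the links of vertices of $I$.
\end{itemize}

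Averaging the $d=4$ inequalities over the links $\lk x$, $x\in R$, does not supply either ingredient. Since $\sum_{x\in R}f_1(\lk(x,A))=3f_2(A)$, per-link statements only translate into bounds on $f_2(A)$. You would still need the global Dehn--Sommerville relation of $\Delta$, and the global estimate $e(R)\le\tfrac14r^2+O(r)$, which is exactly the step missing in $d=4$.
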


The proof of the upper bound on $\alpha(4, n)$ and $\alpha(5, n)$ relies on the Dehn--Sommerville relations, yielding a sharper estimate than the Kruskal--Katona bound used for general spheres. Theorem~\ref{thm:lower} shows that for these dimensions the flag condition does \emph{not} change the asymptotic order 
of the maximum independence number relative to general simplicial spheres.
For $d\geq 6$, a gap between the lower bound $n-O(n^{1/2})$ and the upper bound $n-\Omega(n^{1/\lfloor d/2\rfloor})$ remains open.

To conclude, one very important family of simplicial spheres is the \emph{Bier spheres}. These spheres arise as the deleted join of a simplicial complex $\Delta$ and its combinatorial Alexander dual $\Delta^\vee$; see \cite{BjornerPaffenholzSjostrandZiegler2005}. Since the input for this construction is an arbitrary simplicial complex $\Delta$, one can construct a sphere which contains $\Delta$ and shares with $\Delta$ many of its properties. For example, Bier spheres are known to contain copies of all the independence sets of $\Delta$, so in a way Bier spheres can serve to expose the \emph{independent complex} of any graph or complex $\Delta$. Another interesting property is that $\Bier(\Delta)$ is flag if and only if both $\Delta$ and $\Delta^\vee$ are flag \cite{HeudtlassKatthan2012}.
Bier spheres are also more abundant than simplicial polytopes (see \cite{BjornerPaffenholzSjostrandZiegler2005}). Here we prove

\begin{theorem}\label{thm:maximum-alpha}
Let $\Gamma=\Bier(\Delta)$ be a Bier sphere with $n\ge 3$  vertices.  Then
   $$ \alpha(\Gamma)\le \left\lfloor\frac{n}{2}\right\rfloor.$$
This bound is sharp for every $N\ge 3$.  Consequently, among Bier spheres on
$n$ vertices, the maximum independence set can have size $\left\lfloor\frac{n}{2}\right\rfloor.$
\end{theorem}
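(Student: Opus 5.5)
The plan is to use the explicit description of the graph of a Bier sphere and to show that any large independent set is forced to be entirely "unprimed" or entirely "primed". Once that is established, each element of the set forces a vertex of the opposite type.

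\emph{Setup.} Let $\Delta\subsetneq 2^{[m]}$ and let $\Delta^\vee=\{[m]\setminus F: F\notin\Delta\}$. Recall that $\Bier(\Delta)=\{A\sqcup B' : A\in\Delta,\ B\in\Delta^\vee,\ A\cap B=\emptyset\}$, with vertex set consisting of the unprimed $i$ with $\{i\}\in\Delta$ and the primed $j'$ with $\{j\}\in\Delta^\vee$, i.e.\ with $[m]\setminus\{j\}\notin\Delta$. Let $a$ and $b$ be the numbers of unprimed and primed vertices, so $n=a+b$. The edges are of three kinds:
\begin{itemize}
\item two unprimed vertices $i,j$ are adjacent iff $\{i,j\}\in\Delta$;
\item two primed vertices $i',j'$ are adjacent iff $\{i,j\}\in\Delta^\vee$;
\item an unprimed vertex $i$ and a primed vertex $j'$ that are both present are adjacent whenever $i\neq j$, since $\{i\}$ and $\{j\}$ are then disjoint.
\end{itemize}

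\emph{Reduction to one type.} By the third kind of edge, an independent set $I$ that contains both an unprimed and a primed vertex must be of the form $\{i,i'\}$, so $|I|\le 2$. Otherwise $I$ consists only of unprimed vertices or only of primed vertices. The two situations are symmetric, because $\Bier(\Delta)\cong\Bier(\Delta^\vee)$ with the roles of primes swapped and $(\Delta^\vee)^\vee=\Delta$. So it suffices to treat $I=\{i_1,\dots,i_r\}$ unprimed.

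\emph{Key step.} Assume $r\ge 3$. For each $i\in I$, choose two other elements $j,k\in I$. Then $\{j,k\}\notin\Delta$ and $\{j,k\}\subseteq[m]\setminus\{i\}$, so $[m]\setminus\{i\}\notin\Delta$. Hence the primed vertex $i'$ is present, which gives $b\ge r$. Together with $a\ge r$ this yields $n\ge 2r$, that is, $r\le\lfloor n/2\rfloor$.

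\emph{Small cases.} It remains to show that independent sets of size $2$ are allowed, i.e.\ that $\lfloor n/2\rfloor\ge2$, which holds once $n\ge4$. For $n=3$ the sphere is the boundary of a triangle, whose independence number is $1$. I expect this small-case bookkeeping, together with the correct handling of ghost vertices (elements $i$ with $\{i\}\notin\Delta$, and the analogous primed ones), to be the only delicate part. The main inequality is the short argument in the key step.

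\emph{Sharpness.}
\begin{itemize}
\item For even $n=2m$ with $m\ge3$, take $\Delta=\{\emptyset,\{1\},\dots,\{m\}\}$ on $[m]$. All $m$ unprimed vertices are present and pairwise non-adjacent. All primed vertices are present, since each $[m]\setminus\{i\}$ has size at least $2$ and so is a nonface. Thus $n=2m$ and $\alpha\ge m$.
\item For odd $n=2m+1$, take the same $\Delta$ but on the ground set $[m+1]$, so that $m+1$ is a ghost vertex. This gives $m$ unprimed vertices and $m+1$ primed vertices, because $[m+1]\setminus\{i\}$ is a nonface for $m\ge2$. The set $\{1,\dots,m\}$ is independent of size $\lfloor n/2\rfloor$.
\end{itemize}
The remaining small values of $n$ would be checked by hand.
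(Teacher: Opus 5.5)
Your argument is correct and is essentially the paper's proof. It uses the same reduction to one shore and the same use of $r\ge3$ to get $[m]\setminus\{i\}\notin\Delta$, which forces barred vertices; you only need this for $i\in I$, whereas the paper notes that all $m$ barred vertices occur. Your ghost-element odd example is the paper's odd example read through the symmetry $\Bier(K)\cong\Bier(K^\vee)$.

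The one thing you leave unwritten is sharpness at $n=3,4$, which requires exhibiting explicit Bier spheres. For $n=3$ take the triangle, e.g.\ $K$ the boundary of the triangle on $[3]$, so that $K^\vee=\{\varnothing\}$ and $\Bier(K)=K$. For $n=4$ take, as in the paper, $K$ on $[3]$ with facets $\{1,2\},\{1,3\}$, whose Bier graph is a $4$-cycle.
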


\noindent {\bf Structure of the paper:} Section~2 collects the necessary background on simplicial complexes and face numbers. In Section~3 we prove Theorem~\ref{thm:beta}, establishing sharp bounds on $\beta(d,n)$ for all $d\geq4$. Section~4 proves the lower bound on $\alpha(d,n)$, determines the exact independence number of the principal construction, and proves the upper bound for $d=4,5$. Section~5 discusses details on the independence sets and independence numbers of Bier spheres including a proof of Theorem \ref{thm:maximum-alpha}. Finally, Section~6 states the sharp-constant conjecture; explicit evidence is recorded in Appendix~\ref{app:computational}.

\section{Basic notation}
\subsection{Simplicial complexes and the face numbers}
A {\em simplicial complex} $\Delta$ with vertex set $V=V(\Delta)$ is a nonempty collection of subsets of $V$, called {\em faces}, that is closed under inclusion and contains all singletons: $\{v\}\in \Delta$ for all $v\in V$. A face $F\in \Delta$ has {\em dimension} $i$ if $|F| = i + 1$; in this case we say that $F$ is an $i$-face. The 0-faces, 1-faces, and inclusion-maximal faces are called {\em vertices}, {\em edges}, and {\em facets}, respectively. For brevity, we denote a vertex by $v$ and an edge by $uv$ rather than by $\{v\}$ and $\{u,v\}$. The {\em dimension} of $\Delta$ is $\max\{\dim F: F\in \Delta\}$.

If $F$ is a face of $\Delta$, then the {\em star} of $F$ and the {\em link} of $F$ are given by
	\[
	\st(F, \Delta)=\{G: F\cup G\in \Delta\},\quad
	\lk(F, \Delta)=\{G\in \st(F,\Delta): G\cap F=\emptyset\}.
	\]
If $W$ is a subset of $V(\Delta)$, then we let $\Delta\backslash W=\{G\in \Delta: G\cap W=\emptyset\}$. If $\Delta_1$ and $\Delta_2$ are two simplicial complexes on disjoint vertex sets, then the {\em join} of $\Delta_1$ and $\Delta_2$ is
\[
\Delta_1*\Delta_2=\{F\cup G: F\in \Delta_1, G\in \Delta_2\}.
\]

A simplicial complex is called {\em flag} if it is the clique complex of its graph. A simplicial complex $\Delta$ is called a {\em simplicial $(d-1)$-sphere} if its geometric realization is homeomorphic to a $(d-1)$-dimensional sphere. A basic example of a flag $(d-1)$-sphere is the octahedral $(d-1)$-sphere, the join of $d$ copies of a 0-sphere. We denote it by $O_d$. If $\Delta$ is flag, then it satisfies the {\em link condition}: for every edge $uv\in \Delta$,
	$\lk(u,\Delta)\cap\lk(v,\Delta)=\lk(uv,\Delta)$.

Let $\Delta$ be a simplicial complex of dimension $d-1$. Denote by $f_i=f_i(\Delta)$ the number of $i$-faces of $\Delta$, with the convention $f_{-1}(\Delta)=1$. The {\em $h$-numbers} of $\Delta$ are defined as a linear transformation of the $f$-numbers:
\[
h_j=h_j(\Delta)=\sum_{i=0}^j (-1)^{j-i}
\binom{d-i}{d-j}f_{i-1}(\Delta), \quad 0\leq j\leq d.
\]
It is easy to check that all the $f$-numbers are nonnegative integer combinations of the $h$-numbers. Furthermore,  if $\Delta$ is a simplicial $(d-1)$-sphere, then the $h$-numbers of $\Delta$ satisfy the Dehn--Sommerville relations $h_i=h_{d-i}$. This motivates the definition of the $g$-numbers: 
if we let $g_0=1$, and $g_i=h_i-h_{i-1}$ for all $1\leq i\leq \lfloor d/2\rfloor$, then the $g$-numbers completely determine the $h$- and $f$-numbers. 

We will need the following lemma in later sections:
\begin{lemma}\label{lm:f-h}
		Let $\Delta$ be a simplicial $(d-1)$-sphere. Then
	\[
	f_{d-2}(\Delta)=\begin{cases}
			(2m+1)\sum_{j=0}^m h_j(\Delta)& d=2m+1\\
			2m\sum_{j=0}^{m-1} h_j(\Delta)+mh_m(\Delta) & d=2m 
		\end{cases}.
	\]
\end{lemma}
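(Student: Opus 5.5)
The plan is to count ridges through the $h$-vector, starting from the pseudomanifold property and a standard identity for $h$-polynomials. Every ridge of a simplicial $(d-1)$-sphere lies in exactly two facets, and each facet has $d$ ridges. Hence $2f_{d-2}=d f_{d-1}$. Also $f_{d-1}=\sum_{j=0}^d h_j$. This alone does not give the stated formula, which has the form $\sum_j c_j h_j$ with specific coefficients, so I will instead express $f_{d-2}$ directly in terms of the $h$-numbers.

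The cleanest route is the polynomial identity
\[
\sum_{i=0}^{d} f_{i-1}(x-1)^{d-i}=\sum_{j=0}^d h_j x^{d-j},
\]
which is equivalent to the definition of the $h$-numbers. Differentiating once and setting $x=1$ kills every term except $i=d-1$. This gives
\[
f_{d-2}=\sum_{j=0}^d (d-j)h_j.
\]
This is the standard fact that $f_{d-2}$ is a nonnegative combination of the $h_j$.

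Next I apply the Dehn--Sommerville relations $h_j=h_{d-j}$ and pair the terms for $j$ and $d-j$. Each pair contributes $(d-j)h_j+j\,h_{d-j}=d\,h_j$.

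\begin{itemize}
\item If $d=2m+1$, there is no middle term, and the pairs $j=0,\dots,m$ give $f_{d-2}=(2m+1)\sum_{j=0}^m h_j$.
\item If $d=2m$, the pairs $j=0,\dots,m-1$ give $2m\sum_{j=0}^{m-1}h_j$. The unpaired middle term $j=m$ contributes $(d-m)h_m=mh_m$.
\end{itemize}

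Both cases agree with the statement of Lemma~\ref{lm:f-h}.

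As a consistency check, the same pairing gives $f_{d-1}=\sum_j h_j$ and recovers $f_{d-2}=\tfrac d2 f_{d-1}$. None of these steps is a real obstacle. The only point needing care is the derivative identity. If one prefers to avoid it, one can check directly from the definition that the coefficient of $f_{i-1}$ in $\sum_j (d-j)h_j$ vanishes except at $i=d-1$. That check uses the identity $\sum_j (-1)^{j-i}(d-j)\binom{d-i}{d-j}=0$ for $d-i\ge2$, which follows from the binomial theorem.
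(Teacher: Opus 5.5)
Your proof is correct and follows the paper's: the paper reads off $f_{d-2}=\sum_{j}(d-j)h_j$ from the inverse relation $f_{i-1}=\sum_{j}\binom{d-j}{i-j}h_j$ at $i=d-1$, which is equivalent to your derivative at $x=1$, and then pairs $j$ with $d-j$ via Dehn--Sommerville exactly as you do. Incidentally, the route you set aside at the start also works: $2f_{d-2}=d\,f_{d-1}$, together with $f_{d-1}=\sum_j h_j$ and the same pairing, already gives the formula, as your own consistency check shows.
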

\begin{proof}
	Recall that  for a simplicial $(d-1)$-sphere $\Delta$, $f_{i-1}(\Delta) = \sum_{j=0}^{i} \binom{d-j}{i-j} h_j(\Delta)$ for $0\leq i \leq d$.
	In the case $i=d-1$, we obtain $f_{d-2}(\Delta) = \sum_{j=0}^{d} (d-j)\, h_j(\Delta)$. We now simplify using the Dehn--Sommerville relations
	$h_j(\Delta)=h_{d-j}(\Delta)$. When $d=2m+1$, 
	\begin{align*}
	f_{d-2}(\Delta)&= \sum_{j=0}^{m} (2m+1-j)h_j(\Delta)+ \sum_{j=m+1}^{2m+1} (2m+1-j)h_j(\Delta)\\
	&=\sum_{j=0}^{m} (2m+1-j)h_j(\Delta)+\sum_{k=0}^{m} kh_k(\Delta)=(2m+1)\sum_{j=0}^m h_j(\Delta).
	\end{align*}
	When $d=2m$, 
	\begin{align*}
		f_{d-2}(\Delta)&= \sum_{j=0}^{m-1} (2m-j)h_j + mh_m
		+ \sum_{j=m+1}^{2m} (2m-j)h_j\\
		&=\sum_{j=0}^{m-1} (2m-j)\,h_j + m\,h_m + \sum_{k=0}^{m-1} k\,h_k
		= 2m\sum_{j=0}^{m-1} h_j(\Delta) + m\,h_m(\Delta).
	\end{align*}
\end{proof}

Throughout the paper, asymptotic notation refers to $n\to\infty$ with
$d$ fixed; implicit constants may depend on $d$.
The following exact bound on $\alpha(3,n)$ was proved in \cite{CNevo}.
It is attained by the suspension of an $(n-2)$-cycle.
\begin{theorem}
	For $n\geq 6$, $\alpha(3, n)=\lfloor n/2\rfloor -1$. 
\end{theorem}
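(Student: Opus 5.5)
We have to prove that $\alpha(3,n)=\lfloor n/2\rfloor-1$ for flag $2$-spheres on $n\ge 6$ vertices, and the plan is to prove the two inequalities separately.

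\emph{Lower bound.} Take the suspension of an $(n-2)$-cycle $C$ with apexes $a,b$. This is a flag $2$-sphere as soon as $n-2\ge 4$, i.e.\ $n\ge 6$, since a cycle of length at least $4$ is flag and suspension preserves flagness. Any independent set containing an apex can contain no other vertex except the opposite apex, so it has size at most $2$. An independent set avoiding the apexes is an independent set of $C$, which has size at most $\lfloor (n-2)/2\rfloor=\lfloor n/2\rfloor-1$, and this is attained. For $n\ge 6$ we have $\lfloor n/2\rfloor-1\ge 2$, so this is the maximum.

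\emph{Upper bound.} Let $\Delta$ be a flag $2$-sphere on $n$ vertices and $I$ an independent set with $|I|=r$. Every edge and every triangle meets $V\setminus I$ in at least $1$ and $2$ vertices respectively, since no edge lies inside $I$. By Euler's formula $f_1=3n-6$ and $f_2=2n-4$. For $v\in I$ the link $\lk(v)$ is a cycle of length $\deg v$ on vertices outside $I$. Flagness forces $\deg v\ge 4$, because a $3$-cycle link would span a missing triangle. Flagness also forces the links of distinct vertices of $I$ to share no edge. Indeed, an edge $xy$ lying in both $\lk(u)$ and $\lk(v)$ with $u\ne v$ would mean the edge $xy$ lies in the two triangles $uxy$ and $vxy$. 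In a $2$-sphere these are the only two triangles on $xy$, so the star of $xy$ is covered, and one then needs to exclude the degenerate case.

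The cleaner route is to delete $I$. Removing each $v\in I$ replaces its star (a disc) by a hole bounded by $\lk(v)$. The complex $\Delta' = \Delta[V\setminus I]$ is therefore a $2$-sphere with $r$ holes, these holes being disjoint open discs bounded by induced cycles of length at least $4$. Gluing a new face onto each hole gives a polyhedral map $M$ on the sphere with $n-r$ vertices, $3n-6-\sum_{v\in I}\deg v$ edges, and faces consisting of the $2n-4-\sum_{v\in I}\deg v$ triangles together with $r$ faces of size at least $4$. Count face incidences in $M$ via $2E\ge 3F_{\triangle}+4r$ and combine this with Euler's formula $V-E+F=2$ for $M$. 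Triangulating each large face of $M$ in the standard way turns this into the inequality that a planar graph on $n-r$ vertices has at most $3(n-r)-6$ edges, with each $k$-gon face contributing a deficit of $k-3\ge 1$. Writing $D=\sum_{v\in I}\deg v$, this gives
\[
3n-6-D+(D-3r)\le 3(n-r)-6,
\]
which is an identity, so it provides no information. The extra input must come from flagness of the whole of $\Delta$, not just the degree bound.

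The main obstacle is exactly this: finding the right inequality. The plan is to show the large faces of $M$ (the links of vertices in $I$) cannot be too numerous. The key flag constraint is that two vertices $u,v\in I$ cannot have links sharing two non-adjacent vertices $x,y$ with $xy\notin\Delta$ that are also adjacent, and more usefully that every vertex $w\notin I$ has a link cycle in which the $I$-vertices form an independent set of that cycle. So $w$ is adjacent to at most $\lfloor \deg w/2\rfloor$ vertices of $I$, which gives $D\le \sum_{w\notin I}\deg w/2$. Since $\sum_{w\in V}\deg w=6n-12$, this yields $D\le (6n-12-D)/2$, i.e.\ $D\le 2n-4$. Combined with $D\ge 4r$, this gives $r\le (n-2)/2$, so $r\le\lfloor n/2\rfloor-1$. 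This last counting step is the crux, and I expect it to close the argument; the edge cases where equality forces all degrees of $I$ to be $4$ only need checking for consistency.
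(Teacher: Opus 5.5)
Your final counting argument is correct and complete. Together with the suspension of an $(n-2)$-cycle, which is exactly the extremal example the paper mentions, it proves the statement. The paper itself gives no proof here and simply cites Chudnovsky--Nevo \cite{CNevo}, so the comparison is between an outsourced result and your self-contained double count.

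For the upper bound, you count edges between $I$ and $V\setminus I$, using that $N(w)\cap I$ is independent in the cycle $\lk(w)$. This gives $D=\sum_{v\in I}\deg v\le 2n-4$, and $\deg v\ge 4$ then yields $r\le (n-2)/2$. No equality-case check is needed.

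You actually had this inequality one paragraph earlier without noticing it. The number $2n-4-D$ of surviving triangles in your map $M$ is nonnegative, because no triangle contains two vertices of $I$ and each $v\in I$ lies in exactly $\deg v$ triangles. This one-line version is precisely the $d=3$ case of the facet count $f_{d-1}(\Delta\setminus I)\ge 0$ that drives the proof of Lemma~\ref{thm:beta-upper}, so it fits the paper's own machinery. It also makes clear that flagness enters only through $\deg v\ge 4$. With only $\deg v\ge 3$, the same count gives $r\le (2n-4)/3$ for arbitrary simplicial $2$-spheres, which is attained by subdividing every triangle of a $2$-sphere.

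When you write this up, cut the detours, because one of them is false. Distinct vertices of $I$ can have links sharing edges: the two apexes of the octahedron are non-adjacent and have identical links. The sentence about links sharing two non-adjacent vertices ``that are also adjacent'' does not make sense either. Neither claim is used in your final argument, so the proof stands, but they should not appear.
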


\subsection{The upper and lower bounds on the face numbers}
To estimate the size of an independent set in a simplicial sphere, we use several standard bounds on face numbers. Recall that the cyclic $d$-polytope $C(d,n)$ is the convex hull of $n$ distinct points on the moment curve
$\{(t,t^2,\dots,t^d)\in\R^d:t\in\R\}$. Its boundary complex $\partial C(d,n)$ is $\lfloor d/2\rfloor$-neighborly: every subset of at most $\lfloor d/2\rfloor$ vertices is a face; see \cite{Ziegler}. The following lemma combines the Upper Bound Theorem \cite{Stanley75} with the nonnegativity of the $g$-vector for simplicial spheres \cite{Adiprasito-g-conjecture,KaruXiao}.
\begin{lemma}\label{lm:face-bounds}
	Let $d\geq 3$. Let $\Delta$ be a simplicial $(d-1)$-sphere with $n$ vertices. Then
	\begin{enumerate}
		\item $f_i(\Delta)\leq f_i(\partial C(d, n))$ for all $1\leq i\leq d-1$.
		\item $g_i(\Delta)\geq 0$ for $1\leq i\leq \lfloor d/2\rfloor$.
	\end{enumerate} 
\end{lemma}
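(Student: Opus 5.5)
The lemma combines two deep results from the literature, so my plan is to reduce each part to the cited theorem and add only the translation between $h$-numbers and $f$-numbers. I will not try to reprove either theorem.

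\emph{Part 2.} This is exactly the nonnegativity half of the $g$-theorem for simplicial spheres. Adiprasito \cite{Adiprasito-g-conjecture} proves the generic Lefschetz property for $\kk[\Delta]$ of a simplicial sphere, and \cite{KaruXiao} gives an alternative route. The property says that for a generic linear system of parameters $\theta$ and a generic linear form $\ell$, multiplication by $\ell^{d-2i}$ is an isomorphism between degrees $i$ and $d-i$ of $\kk[\Delta]/(\theta)$ for $i\le\lfloor d/2\rfloor$. In particular $\cdot\ell$ is injective from degree $i-1$ to degree $i$ for $i\le\lfloor d/2\rfloor$. Since $\dim_\kk(\kk[\Delta]/(\theta))_j=h_j$, injectivity gives $h_{i-1}\le h_i$, that is, $g_i\ge0$. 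I will simply quote this.

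\emph{Part 1.} I will invoke Stanley's Upper Bound Theorem for spheres \cite{Stanley75}. Its core statement is the $h$-vector bound
\[
h_j(\Delta)\le\binom{n-d+j-1}{j}\qquad\text{for }0\le j\le\lfloor d/2\rfloor .
\]
It comes from Cohen--Macaulayness of $\kk[\Delta]$ via Reisner's criterion: $\kk[\Delta]/(\theta)$ is a standard graded algebra generated by $n-d$ linear forms, so its degree-$j$ piece has dimension at most the number of monomials of degree $j$ in $n-d$ variables. The Dehn--Sommerville relations $h_j=h_{d-j}$ then extend the same bounds to all $j$. The cyclic polytope $\partial C(d,n)$ attains equality for every $j\le\lfloor d/2\rfloor$, because it is $\lfloor d/2\rfloor$-neighborly: its $f_{i-1}$ equals $\binom{n}{i}$ for $i\le\lfloor d/2\rfloor$, and these low $f$-numbers determine the low $h$-numbers. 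By symmetry, $\partial C(d,n)$ therefore attains equality for all $j$.

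Finally, I will use the formula $f_{i-1}=\sum_{j=0}^{i}\binom{d-j}{i-j}h_j$ already recalled in the proof of Lemma~\ref{lm:f-h}. It expresses each $f_{i-1}$ as a nonnegative combination of the $h_j$. Comparing termwise then gives $f_i(\Delta)\le f_i(\partial C(d,n))$ for all $i$.

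I expect no real obstacle. The only points needing care are that the cited upper bound for spheres, not only polytopes, requires Cohen--Macaulayness, and that both the range of $i$ and the hypothesis $d\ge3$ match the cited statements.
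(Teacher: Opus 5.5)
Your proposal is correct and takes the same approach as the paper, which gives no argument beyond citing Stanley's Upper Bound Theorem and the nonnegativity half of the $g$-theorem (Adiprasito, Karu--Xiao). Your translation via $f_{i-1}=\sum_j\binom{d-j}{i-j}h_j$ is the standard reduction, with one precision to state: for $j>\lfloor d/2\rfloor$ the bound you need is $h_j(\Delta)=h_{d-j}(\Delta)\le\binom{n-j-1}{d-j}=h_j(\partial C(d,n))$, not $\binom{n-d+j-1}{j}$, since the cyclic polytope does not attain the latter in that range.
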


For flag spheres, the octahedral sphere gives componentwise lower bounds on the face numbers \cite{Athanasiadis11}, but sharp upper bounds and stronger lower-bound conjectures remain open in most dimensions. Zheng proved the flag upper bound theorem in dimension 3 \cite{Z-flag}; Adamaszek and Hladk\'y obtained an upper bound theorem for odd-dimensional flag manifolds with sufficiently many vertices \cite{Adam-Hladky}. The Davis--Okun theorem \cite{DavisOkun} implies the required lower bound on the number of edges of a flag 3-sphere; the corresponding bound for flag 4-spheres follows by applying it to vertex links and using the Dehn--Sommerville relations.
\begin{lemma}
	Let $\Delta$ be a flag $(d-1)$-sphere. 
	\begin{enumerate}
		\item $f_i(\Delta)\geq f_i(O_{d})$ for all $0\leq i \leq d-1$.
		\item For $d=4$ and $5$, $f_1(\Delta)-(2d-3)f_0(\Delta)+2d(d-2)\geq 0$.
		\item For $d=4$, $f_1(\Delta)\leq \lfloor f_0(\Delta)^2/4\rfloor +f_0(\Delta)$. Equality holds when $\Delta$ is the join of two cycles whose lengths are as equal as possible.
	\end{enumerate}
\end{lemma}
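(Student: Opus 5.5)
This lemma collects three facts, and I would prove them separately, leaning on the cited results.

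\emph{Part 1.} This is the theorem of Athanasiadis \cite{Athanasiadis11}, so it can simply be cited. For a self-contained route, I would argue by induction on $d$ using vertex links. In a flag sphere every vertex link $\lk(v,\Delta)$ is a flag $(d-2)$-sphere. In a flag complex, $v$ together with its neighbours spans $\st(v,\Delta)$, and a non-neighbour exists because $\Delta$ is not a cone. Hence $f_0(\Delta)\ge 2+f_0(\lk v)\ge 2d$. For the higher face numbers, double counting gives $(i+1)f_i(\Delta)=\sum_v f_{i-1}(\lk v)$. Applying the inductive bound $f_{i-1}(\lk v)\ge f_{i-1}(O_{d-1})$ then gives
\[
(i+1)f_i\ge f_0\,f_{i-1}(O_{d-1})\ge 2d\,f_{i-1}(O_{d-1})=(i+1)f_i(O_d),
\]
which is the claimed bound.

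\emph{Part 2.} For $d=4$, the Davis--Okun theorem \cite{DavisOkun} (the Charney--Davis conjecture in dimension 3) states that $\gamma_2\ge 0$ for flag 3-spheres. Writing $h_1=f_0-4$ and $h_2=f_1-3f_0+6$, the $\gamma$-vector gives
\[
\gamma_2=h_2-3h_1+2=f_1-5f_0+20,
\]
which is exactly the expression $f_1-(2d-3)f_0+2d(d-2)$ at $d=4$. For $d=5$, every vertex link is a flag 3-sphere, so
\[
f_1(\lk v)-5f_0(\lk v)+20\ge 0 \quad\text{for every } v.
\]
Summing over $v$ gives $\sum_v f_1(\lk v)=3f_2$ and $\sum_v f_0(\lk v)=2f_1$, hence $3f_2-10f_1+20f_0\ge 0$. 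The Dehn--Sommerville relations for a $4$-sphere ($h_1=h_4$, $h_2=h_3$) let us express $f_2$ through $f_0$ and $f_1$; the relevant one is $f_2=4f_1-10f_0+20$. Substituting, we get
\[
3(4f_1-10f_0+20)-10f_1+20f_0=2f_1-10f_0+60\ge 0,
\]
that is, $f_1-5f_0+30\ge 0$. The target is $f_1-7f_0+30\ge 0$, which is stronger, so the step needing care is getting the right inequality for $d=5$. My first attempt would replace the $\gamma_2$ bound on links with the sharper link inequality $f_1(\lk v)\ge 5f_0(\lk v)-20$ combined with Part 1. If that still fails, I would use the $\gamma$-vector of the 4-sphere itself. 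Following the paper's hint ("applying it to vertex links and using the Dehn--Sommerville relations"), I would carefully recompute $h_2$ for $d=5$: $h_2=f_1-4f_0+10$ and $h_1=f_0-5$. Expressing the summed link inequality in $h$-numbers, I expect it to become $h_2\ge 3h_1-c$, which rearranges to the stated form. Reconciling these constants is the main obstacle.

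\emph{Part 3.} This is Zheng's flag upper bound theorem in dimension 3 \cite{Z-flag}, and I would cite it. The idea behind it is that for each vertex $v$, the link $\lk(v)$ is a flag 2-sphere, the set $V\setminus\st(v)$ is nonempty, and the graph is triangle-rich only within links. A Tur\'an-type count of the edges not belonging to a triangle-free bipartite structure then yields $f_1\le \lfloor f_0^2/4\rfloor+f_0$. For the equality case, the join $C_a*C_b$ with $a+b=f_0$ has exactly
\[
f_1=ab+a+b,
\]
and $ab+a+b=\lfloor f_0^2/4\rfloor+f_0$ precisely when $|a-b|\le 1$.
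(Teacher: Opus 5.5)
\textbf{There is a genuine gap in Part~2}, and it comes from a miscomputed $\gamma$-vector, not from your strategy.

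\emph{The case $d=4$.} Here $h(t)=(1+t)^4+\gamma_1t(1+t)^2+\gamma_2t^2$, so $\gamma_1=h_1-4$ and $\gamma_2=h_2-2h_1+2$. Equivalently, $\gamma_2$ is the Charney--Davis quantity $h(-1)=2-2h_1+h_2$. The coefficient $3$ you used belongs to the $d=5$ formula $\gamma_2=h_2-3h_1+5$. Your own expression is also internally off, since $h_2-3h_1+2=f_1-6f_0+20$, not $f_1-5f_0+20$. With $h_1=f_0-4$ and $h_2=f_1-3f_0+6$, the correct value is $\gamma_2=f_1-5f_0+16$. Note that $2d(d-2)=16$ at $d=4$, not $20$, and $O_4$ with $(f_0,f_1)=(8,24)$ attains equality. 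As written, your $d=4$ argument only gives the weaker $f_1-5f_0+20\ge0$, so it does not prove the stated inequality. The corrected version is exactly the Davis--Okun theorem, as in the paper.

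\emph{The case $d=5$.} The same wrong constant is the entire source of your \emph{main obstacle}, and your proposed remedies would not remove it. With $+20$, summing over links produces $-10f_0$. No lower bound such as $f_0(\lk v)\ge8$ or $f_0\ge10$ can turn that into the needed $-14f_0$. Your inequality $f_1(\lk v)\ge5f_0(\lk v)-20$ is not sharper; it is the same incorrect bound.

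With the correct constant, your plan closes at once. Summing $f_1(\lk v)-5f_0(\lk v)+16\ge0$ over all vertices gives $3f_2-10f_1+16f_0\ge0$. Substituting the Dehn--Sommerville relation $f_2=4f_1-10f_0+20$ gives
\[
2f_1-14f_0+60\ge0,
\]
that is, $f_1-7f_0+30\ge0$. This is precisely $\gamma_2=h_2-3h_1+5\ge0$ for the flag $4$-sphere. It is what the paper means by applying Davis--Okun to vertex links and using Dehn--Sommerville.

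\textbf{Parts~1 and~3 are fine.} In Part~1 your induction is a valid self-contained substitute for citing Athanasiadis, since $(i+1)f_i(O_d)=2d\,f_{i-1}(O_{d-1})$. To be safe, run it over flag homology spheres: their vertex links are again of this type, they are never cones, and the base case is trivial. In Part~3 you cite Zheng just as the paper does, and your equality computation for $C_a*C_b$ is correct.
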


\section{The bounds on $\beta(d, n)$}
In this section, we give a sharp bound on $\beta(d, n)$ for all $d\geq 4$. Our strategy is to estimate $f_{d-2}(\Delta\setminus I)$ in terms of $f_0(\Delta\setminus I)$, thereby deriving an upper bound on $I$. 

We begin with the lower bound on $\beta(d,n)$. The following construction is based on an example in \cite{CNevo}.
\begin{definition}
	Let $\Sigma(d,m)$ be the simplicial $(d-1)$-sphere obtained from
	$\partial C(d,m)$ by stellar subdividing all of its facets. For each
	facet $F$ of $\partial C(d,m)$, denote the new vertex by $x_F$.
\end{definition}
\begin{lemma}
	For every fixed $d\geq4$,
	\[
	\beta(d,n)\geq n-O(n^{1/\lfloor d/2\rfloor}).
	\]
\end{lemma}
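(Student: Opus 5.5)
We use the sphere $\Sigma(d,m)$ from the preceding definition and take as independent set the new vertices $I=\{x_F : F \text{ a facet of } \partial C(d,m)\}$.

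First we check that $I$ is independent. A stellar subdivision of a facet $F$ replaces $F$ by the cone from $x_F$ over $\partial F$. Hence every neighbor of $x_F$ lies in $F$, so it is an old vertex. In particular no two new vertices are adjacent, and $I$ is independent in $\Sigma(d,m)$.

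Next we count vertices. $\Sigma(d,m)$ is still a simplicial $(d-1)$-sphere, since stellar subdivision does not change the topology. Its number of vertices is
\[
N(m)=m+f_{d-1}(\partial C(d,m)),
\]
and $|I|=f_{d-1}(\partial C(d,m))=N(m)-m$. By the Upper Bound Theorem computation (or directly from the $h$-vector of the cyclic polytope, $h_j=\binom{m-d+j-1}{j}$ for $j\le\lfloor d/2\rfloor$), the facet count of $\partial C(d,m)$ is $\Theta(m^{\lfloor d/2\rfloor})$ for fixed $d$. Therefore $m=\Theta(N(m)^{1/\lfloor d/2\rfloor})$, and
\[
\beta(d,N(m))\ge N(m)-O\bigl(N(m)^{1/\lfloor d/2\rfloor}\bigr).
\]

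The main obstacle is that this only handles the values $n=N(m)$, and these are sparse. For general $n$ we choose $m$ with $N(m)\le n$ maximal and interpolate. We do not subdivide all facets: we subdivide only $k$ facets, where $0\le k\le f_{d-1}(\partial C(d,m))$. This gives a sphere on $m+k$ vertices with an independent set of size $k$ (the new vertices), so
\[
\beta(d,m+k)\ge (m+k)-m.
\]
Now fix $n$ large. Take $m$ to be the largest integer with $m\le n\le N(m)$; since $N(m)$ is increasing and $N(m+1)\ge m+1$, such an $m$ exists and satisfies $N(m)\ge n$. Also $m\le n$ and $N(m-1)<n$, which gives $m=O(n^{1/\lfloor d/2\rfloor})$. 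Setting $k=n-m\le f_{d-1}$ yields
\[
\beta(d,n)\ge n-m=n-O\bigl(n^{1/\lfloor d/2\rfloor}\bigr).
\]
The one detail to verify is the estimate $m=O(n^{1/\lfloor d/2\rfloor})$. It follows from $N(m-1)\ge c\,(m-1)^{\lfloor d/2\rfloor}$ for a constant $c=c(d)>0$, which holds because $\partial C(d,m-1)$ has at least $h_{\lfloor d/2\rfloor}=\binom{m-1-d+\lfloor d/2\rfloor-1}{\lfloor d/2\rfloor}$ facets.
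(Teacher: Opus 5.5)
Your construction is the same as the paper's: stellar-subdivide facets of $\partial C(d,m)$ and take the new vertices as the independent set. You differ in the interpolation step, and there your argument is more complete than the paper's.

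The paper simply sets $n=f_0(\Sigma(d,m))=m+f_{d-1}(\partial C(d,m))$. That proves the bound only along this sparse sequence of values of $n$. Monotonicity of $\beta(d,\cdot)$ (subdividing a facet never destroys an independent set) does not bridge the gaps in general. Consecutive gaps satisfy $N(m+1)-N(m)=\Theta(m^{\lfloor d/2\rfloor-1})$, and for $d\ge 6$ this is much larger than the allowed deficit $m=\Theta(n^{1/\lfloor d/2\rfloor})$. Subdividing only $k=n-m$ of the facets, as you do, gives the estimate for every $n$ directly. This is exactly what Theorem~\ref{thm:beta} needs.

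One wording slip must be fixed, because as written the choice of $m$ fails. ``The largest integer $m$ with $m\le n\le N(m)$'' is $m=n$, since $N(n)\ge n$. That gives $k=0$ and a trivial bound. Your earlier phrasing, ``$m$ with $N(m)\le n$ maximal,'' is also off: it can leave $n-m>f_{d-1}(\partial C(d,m))$.

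What you need, and what your later use of $N(m-1)<n$ presupposes, is the \emph{smallest} $m\ge d+1$ with $N(m)\ge n$. With that choice, either $m=d+1=O(1)$ or $N(m-1)<n$. In the latter case $N(m-1)\ge m-1$ gives $m\le n$, and your bound $N(m-1)\ge h_{\lfloor d/2\rfloor}(\partial C(d,m-1))$ gives $m=O(n^{1/\lfloor d/2\rfloor})$. Also $k=n-m\le f_{d-1}(\partial C(d,m))$, so the remaining steps go through.
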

\begin{proof}
	Consider $\Sigma(d,m)$. The set
	$I=\{x_F:F\text{ is a facet of }\partial C(d,m)\}$ is independent.
	By the face-number formula for cyclic polytopes,
	\[
	|I|=f_{d-1}(\partial C(d,m))
	=\Theta(m^{\lfloor d/2\rfloor}).
	\]
	On the other hand,
	$f_0(\Sigma(d,m))=m+|I|$. Writing
	$n=f_0(\Sigma(d,m))$, we obtain
	\[
	\beta(d,n)\geq |I|
	=n-O(n^{1/\lfloor d/2\rfloor}),
	\]
	as desired.
\end{proof}
\begin{lemma}\label{thm:beta-upper}
	For every $d\geq4$, there is a constant $c_d>0$ such that
	\[
	\beta(d,n)\leq n-c_dn^{1/\lfloor d/2\rfloor}
	\]
	for all sufficiently large $n$.
\end{lemma}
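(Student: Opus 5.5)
The plan is to compare the size of $I$ with the number of ridges that avoid $I$, and then bound that ridge count polynomially in $k=n-|I|$, the number of vertices outside $I$. Fix a simplicial $(d-1)$-sphere $\Delta$ on $n$ vertices and a maximum independent set $I$, and set $m=\lfloor d/2\rfloor$. The target is an inequality of the form
\[
|I|\le C_d\,k^{m}.
\]
Since $|I|=n-k$, this gives $n\le k+C_dk^m$, hence $k\ge c_d n^{1/m}$ for large $n$, which is the claim.

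\emph{Step 1 (links of $I$ live in $\Delta\setminus I$).} For $v\in I$, every neighbor of $v$ lies outside $I$, so $\lk(v,\Delta)$ is a $(d-2)$-sphere contained in $\Delta\setminus I$. It therefore has at least $d$ facets, with equality for the boundary of a simplex. Each facet $G$ of $\lk(v,\Delta)$ is a ridge of $\Delta$ and lies in exactly two facets of $\Delta$. Hence any $G$ belongs to the links of at most two vertices of $I$, and we get
\[
d\,|I|\le 2 f_{d-2}(\Delta\setminus I).
\]

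\emph{Step 2 (bounding ridges of $\Delta\setminus I$ by $O(k^m)$).} This is the heart of the proof and the step I expect to be the main obstacle, because $\Delta\setminus I$ is not a sphere, so Lemma~\ref{lm:face-bounds} cannot be applied to it directly. My first attempt is to use Lemma~\ref{lm:f-h}, which writes the ridge count as a combination of $h_0,\dots,h_m$, together with a version of it that separates faces avoiding $I$ from faces containing a vertex of $I$. Faces of size at most $m$ that avoid $I$ number at most $\binom{k}{j}=O(k^m)$. The faces through $v\in I$ come from $\lk(v)$ and should be charged against the facets they create. Concretely, each facet $F\ni v$ contributes exactly one ridge $F\setminus v$ to $\Delta\setminus I$, so the ridges of $\Delta\setminus I$ are in bijection with the sets
\[
\{\text{facets of }\Delta\text{ meeting }I\}\ \cup\ \{\text{ridges of }\Delta\text{ lying in two facets that both avoid }I\}.
\]
Using Lemma~\ref{lm:face-bounds}(2) (that is, $g_i\ge0$) and the relation $d f_{d-1}=2f_{d-2}$, I would try to show that the low-degree contributions of the $I$-vertices to $h_1,\dots,h_m$ are offset by the facets they generate. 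The goal is to leave $f_{d-2}(\Delta\setminus I)-\tfrac{d}{2}|I|$ bounded by a combination of the $f_{j-1}(\Delta\setminus I)$ with $j\le m$. Since the number of $(j-1)$-faces of $\Delta\setminus I$ is at most $\binom{k}{j}$, this would give the $O(k^m)$ bound, with Kruskal--Katona used to control the degree-$m$ terms.

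If the $h$-vector bookkeeping does not close, the fallback is to build a sphere on the $k$ vertices of $\Delta\setminus I$. Each star $\st(v,\Delta)$, $v\in I$, is a ball whose boundary $\lk(v,\Delta)$ lies in $\Delta\setminus I$, and when possible I would retriangulate it without interior vertices. Applying Lemma~\ref{lm:face-bounds}(1) to the resulting sphere then gives $f_{d-2}(\Delta\setminus I)\le f_{d-2}(\partial C(d,k))=O(k^m)$. The difficulty here is that a ball need not admit a triangulation without interior vertices. In that case one would keep a bounded number of extra vertices per star, or process the vertices of $I$ one at a time.

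Combining Steps 1 and 2 then gives $|I|\le C_dk^m$, and the conclusion follows as explained in the first paragraph.
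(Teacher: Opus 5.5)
Your Step~1 is exactly the paper's double count, and your Step~2 target $f_{d-2}(\Delta\setminus I)=O(k^m)$ is true. But Step~2 is the entire content of the lemma, and the proposal does not prove it. The ``first attempt'' only says what you hope the $h$-vector bookkeeping will yield.

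The fallback fails for two reasons:
\begin{itemize}
\item Filling a star without interior vertices can break down for more than the reason you give. The new interior faces may already be faces of $\Delta\setminus I$ (or of another filled star), so the result need not even be a simplicial complex.
\item Keeping a bounded number of vertices per star brings the vertex count to $k+\Theta(|I|)$. The Upper Bound Theorem then gives only $O(n^m)$, which is useless.
\end{itemize}

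Two smaller slips. Your ``bijection'' is false: a ridge of $\Delta\setminus I$ can lie in two facets that \emph{both} meet $I$. In $\Sigma(d,m)$ every ridge of $\Delta\setminus I$ does, which is why Step~1 only gets ``at most two''. Also, an upper bound on $f_{d-2}(\Delta\setminus I)-\frac d2|I|$, as your stated goal reads, does not combine with Step~1. You need an upper bound on $f_{d-2}(\Delta\setminus I)$ itself.

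The missing ingredient is a sign identity for vertex links. Write Lemma~\ref{lm:f-h} in $f$-numbers as $f_{d-2}(\Delta)=\sum_{i=0}^m c_if_{i-1}(\Delta)$. Since $I$ is independent, every face contains at most one vertex of $I$. Hence $f_j(\Delta)=f_j(\Delta\setminus I)+\sum_{v\in I}f_{j-1}(\lk(v,\Delta))$ for all $j$, with $f_{-2}=0$. Substituting on both sides gives
\[
f_{d-2}(\Delta\setminus I)=\sum_{i=0}^m c_if_{i-1}(\Delta\setminus I)-\sum_{v\in I}\Bigl(f_{d-3}(L_v)-\sum_{i=0}^m c_if_{i-2}(L_v)\Bigr),\qquad L_v=\lk(v,\Delta).
\]
The key fact is that for every simplicial $(d-2)$-sphere $L$ the bracket equals $\sum_{t=1}^{r}t\,g_t(L)$, where $r=\lfloor (d-1)/2\rfloor$. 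For example, it is $g_1(L)$ for $d=4$ and $2g_2(L)+g_1(L)$ for $d=5$.

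To see this, note that faces through $v$ contribute $h_{j-1}(L)$ to $h_j(\Delta)$. Applying Lemma~\ref{lm:f-h} to $L$ then reduces the bracket to $\sum_{j=0}^{r-1}\bigl(h_r(L)-h_j(L)\bigr)$, which equals $\sum_{t=1}^{r}t\,g_t(L)$.

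By Lemma~\ref{lm:face-bounds}(2) the bracket is nonnegative. Hence $f_{d-2}(\Delta\setminus I)\le\sum_i c_if_{i-1}(\Delta\setminus I)\le C_d\sum_{j\le m}\binom kj$. Combined with your Step~1 this gives $\frac d2|I|\le C_d'k^m$, and the conclusion follows. This is how the paper argues: your first attempt points down its route, but without establishing the sign of this bracket the outline has no bound on the ridges of $\Delta\setminus I$.
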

\begin{proof}
	Let $\Delta$ be a simplicial $(d-1)$-sphere with $n$ vertices, and let
	$I$ be a maximum independent set. By Lemma~\ref{lm:f-h},
	\begin{equation}\label{eq:f-h}
	f_{d-2}(\Delta)=
	\begin{cases}
		(2m+1)\displaystyle\sum_{j=0}^m h_j(\Delta),&d=2m+1,\\[1ex]
		2m\displaystyle\sum_{j=0}^{m-1}h_j(\Delta)+mh_m(\Delta),&d=2m.
	\end{cases}
	\end{equation}
	Using
	$h_j(\Delta)=\sum_{i=0}^j(-1)^{j-i}
	\binom{d-i}{j-i}f_{i-1}(\Delta)$, we obtain
	\begin{equation}\label{eq: f_{d-2}}
	f_{d-2}(\Delta)=
	\begin{cases}
		(2m+1)\displaystyle\sum_{i=0}^m(-1)^{m-i}
		\binom{2m-i}{m-i}f_{i-1}(\Delta),&d=2m+1,\\[2ex]
		\displaystyle\sum_{i=1}^m(-1)^{m-i}i
		\binom{2m-i-1}{m-i}f_{i-1}(\Delta),&d=2m.
	\end{cases}
	\end{equation}
	Each $(d-2)$-face of $\Delta\setminus I$ belongs to two facets of
	$\Delta=(\Delta\setminus I)\cup\bigcup_{v\in I}\st(v,\Delta)$. Since
	each facet of $\Delta\setminus I$ contains $d$ such faces, it follows
	that
	\[
	2f_{d-2}(\Delta\setminus I)
	=df_{d-1}(\Delta\setminus I)
	+\sum_{v\in I}f_{d-2}(\lk(v,\Delta)).
	\]

	First assume that $d=2m$. Using
	$f_i(\Delta)=f_i(\Delta\setminus I)
	+\sum_{v\in I}f_{i-1}(\lk(v,\Delta))$ and
	$f_{d-1}(\Delta\setminus I)\geq0$, we obtain
	$$
	f_{d-2}(\Delta\setminus I)
	=f_{d-2}(\Delta)
	-\sum_{v\in I}f_{d-3}(\lk(v,\Delta))\geq\sum_{v\in I}\frac12f_{d-2}(\lk(v,\Delta)),$$
    By (\ref{eq: f_{d-2}}), this is equivalent to $$ \sum_{i=1}^m(-1)^{m-i}i
	\binom{2m-i-1}{m-i}f_{i-1}(\Delta)\geq\sum_{v\in I}
	\left(\frac12f_{d-2}+f_{d-3}\right)(\lk(v,\Delta)).
	$$
    Now apply $f_i(\Delta)=f_i(\Delta\backslash I)+\sum_{v\in I} f_{i-1}(\lk(v, \Delta))$ again:
	\begin{align}
	\sum_{i=1}^m(-1)^{m-i}i
	\binom{2m-i-1}{m-i}f_{i-1}(\Delta\setminus I)
	&\geq\sum_{v\in I}\left(
	\begin{gathered}
	\frac12f_{d-2}+f_{d-3}\\[-0.3ex]
	-\displaystyle\sum_{i=1}^m(-1)^{m-i}i
	\binom{2m-i-1}{m-i}f_{i-2}
	\end{gathered}
	\right)(\lk(v,\Delta))\notag\\
	&\geq\sum_{v\in I}\left(
	\begin{gathered}
	\frac12f_{d-2}+(m-1)g_{m-1}\\[-0.3ex]
	{}+\cdots+g_1
	\end{gathered}
	\right)(\lk(v,\Delta))\notag\\
	&\geq\frac d2|I|.\label{eq:d-even}
	\end{align}
	Here we used \eqref{eq:f-h} and Lemma~\ref{lm:face-bounds}: every
	vertex link is a simplicial $(d-2)$-sphere, has at least $d$ facets,
	and has nonnegative $g$-numbers.

	The case $d=2m+1$ is similar:
	\begin{align}
	(2m+1)\!\sum_{i=0}^m(-1)^{m-i}
	\binom{2m-i}{m-i}\!f_{i-1}(\Delta\!\setminus\! I)
	&\!\geq\!\sum_{v\in I}\!\left(
	\begin{gathered}
	\frac12f_{d-2}+f_{d-3}\\[-0.3ex]
	{}-(2m+1)\displaystyle\sum_{i=0}^m(-1)^{m-i}
	\binom{2m-i}{m-i}f_{i-2}
	\end{gathered}
	\right)(\lk(v,\Delta))\notag\\
	&\geq\sum_{v\in I}\left(
	\begin{gathered}
	\frac12f_{d-2}+mg_m+(m-1)g_{m-1}\\[-0.3ex]
	{}+\cdots+g_1
	\end{gathered}
	\right)(\lk(v,\Delta))\notag\\
	&\geq\frac d2|I|.\label{eq:d-odd}
	\end{align}

	In both cases, suppose to the contrary that
	$f_0(\Delta\setminus I)=o(n^{1/m})$, where
	$m=\lfloor d/2\rfloor$. Then
	$f_1(\Delta\setminus I)=o(n^{2/m})$. By the Kruskal--Katona theorem,
	\[
	f_{j-1}(\Delta\setminus I)=o(n^{j/m})
	\qquad(2\leq j\leq m).
	\]
	Thus the left-hand side of \eqref{eq:d-even} or \eqref{eq:d-odd} is
	$o(n)$, while the right-hand side is at least
	$\frac d2|I|=\Theta(n)$, a contradiction. Hence
	$f_0(\Delta\setminus I)\geq c_dn^{1/m}$ for some $c_d>0$, which is
	equivalent to the desired upper bound.
\end{proof}

The preceding lemmas immediately imply the main result of this section.
\begin{theorem}
	For every fixed $d\geq4$, as $n\to\infty$,
	\[
	\beta(d,n)=n-\Theta(n^{1/\lfloor d/2\rfloor}).
	\]
\end{theorem}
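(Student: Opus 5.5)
The theorem follows by combining the two lemmas proved just above; no new argument is needed. For the lower bound, the lemma built on $\Sigma(d,m)$ gives an independent set of size $n-O(n^{1/\lfloor d/2\rfloor})$. One gap remains: that construction only realizes the values $n=f_0(\Sigma(d,m))=m+f_{d-1}(\partial C(d,m))$, which form a sparse sequence. To cover every large $n$, I would stellarly subdivide only some of the facets of $\partial C(d,m)$.

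Concretely, given $n$, choose the least $m$ with $m+f_{d-1}(\partial C(d,m))\ge n$. Then subdivide exactly $k=n-m$ facets of $\partial C(d,m)$. This requires $0\le k\le f_{d-1}(\partial C(d,m))$, which holds by the choice of $m$ once $n$ is large. The resulting sphere has $n$ vertices, and the $k$ new vertices are pairwise nonadjacent. Each new vertex is adjacent only to the vertices of its own facet, so the $k$ new vertices form an independent set.

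It remains to bound $m$. By minimality, $m-1+f_{d-1}(\partial C(d,m-1))<n$. Since $f_{d-1}(\partial C(d,m-1))=\Theta(m^{\lfloor d/2\rfloor})$, this gives $m=O(n^{1/\lfloor d/2\rfloor})$. Hence
\[
\beta(d,n)\ge k=n-m=n-O(n^{1/\lfloor d/2\rfloor})
\]
for every large $n$.

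For the upper bound, Lemma~\ref{thm:beta-upper} gives $\beta(d,n)\le n-c_dn^{1/\lfloor d/2\rfloor}$ for all sufficiently large $n$. Combining the two bounds yields $n-\beta(d,n)=\Theta(n^{1/\lfloor d/2\rfloor})$.

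The only step requiring any care is the interpolation between the sparse values of $n$. Its key input is the growth estimate $f_{d-1}(\partial C(d,m))=\Theta(m^{\lfloor d/2\rfloor})$, which is standard from the face-number formula for cyclic polytopes.
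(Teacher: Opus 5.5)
Your proposal is correct and follows the paper's proof, which simply combines the $\Sigma(d,m)$ lower-bound lemma with Lemma~\ref{thm:beta-upper}. Your partial-subdivision step (subdividing only $n-m$ facets of $\partial C(d,m)$ for the least admissible $m$) fills a gap the paper glosses over: its lower-bound lemma only treats the sparse sequence $n=f_0(\Sigma(d,m))$, and for $d\geq6$ those gaps are too large to close by the bare monotonicity $\beta(d,n+1)\geq\beta(d,n)$.
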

\begin{corollary}
	For every fixed $d\geq4$,
	\[
	\alpha(d,n)\leq
	n-\Theta(n^{1/\lfloor d/2\rfloor}).
	\]
\end{corollary}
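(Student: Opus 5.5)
This corollary follows from comparing the two maxima, with no new combinatorics. Every flag $(d-1)$-sphere is in particular a simplicial $(d-1)$-sphere. So the maximum defining $\alpha(d,n)$ in Problem~\ref{prob:alphabeta} is taken over a subset of the graphs used for $\beta(d,n)$, which gives $\alpha(d,n)\le\beta(d,n)$ for every $n$. Combining this with Lemma~\ref{thm:beta-upper}, for every $d\ge4$ there is $c_d>0$ such that
\[
\alpha(d,n)\le\beta(d,n)\le n-c_d\,n^{1/\lfloor d/2\rfloor}
\]
for all sufficiently large $n$.

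The one point needing care is the meaning of ``$\Theta$'' in the statement, since only an upper bound is asserted. I would read the corollary as saying that $n-\alpha(d,n)$ is bounded below by a positive constant times $n^{1/\lfloor d/2\rfloor}$, i.e.\ $\alpha(d,n)\le n-\Omega(n^{1/\lfloor d/2\rfloor})$. The display above is exactly this. Under that reading, no matching lower bound on $\alpha$ is claimed here; that is the separate subject of Theorem~\ref{thm:lower}.

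I would also record that the argument of Lemma~\ref{thm:beta-upper} uses only the Dehn--Sommerville relations, the nonnegativity of $g$-numbers of vertex links, and Kruskal--Katona. All of these hold for flag spheres in particular, so nothing flag-specific needs to be checked.

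There is no real obstacle in this step. The only hazard is the minor one of stating the bound in the correct direction so that the $\Theta$ is not misread as two-sided.
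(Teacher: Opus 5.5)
Your proposal is correct and follows essentially the paper's route: the paper also obtains the corollary directly from $\alpha(d,n)\le\beta(d,n)$ (flag spheres are a subfamily of simplicial spheres, as noted after Problem~\ref{prob:alphabeta}) together with the upper bound of Lemma~\ref{thm:beta-upper}, and your reading of the $\Theta$ as a one-sided, $\Omega$-type lower bound on the deficit $n-\alpha(d,n)$ is the right one. Your extra remark about re-checking the ingredients for flag spheres is harmless but unnecessary, since the bound on $\beta(d,n)$ already applies to every simplicial sphere.
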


\section{The bounds on $\alpha(d,n)$}
In this section, we study maximum independent sets in flag spheres. We first
construct flag spheres with large independent sets and then prove upper bounds
for $d=4,5$.

\subsection{The lower bound}

Let $\Sigma$ be a simplicial sphere and let $e$ be an edge. The {\em stellar
subdivision} of $\Sigma$ at $e$ replaces
$e*\lk(e,\Sigma)$ by $x_e*\partial e*\lk(e,\Sigma)$, where $x_e$ is a new
vertex. Stellar subdivision at an edge preserves flagness; see, for example,
\cite{LutzNevo}. Motivated by the flag upper bound theorem in dimension 3, we
start with a join of two cycles and subdivide a collection of edges between
them.

\begin{definition}
	Let $s\geq2d$. For $d=4$, let $\Delta_{4,s}$ be the join of a
	$\lfloor s/2\rfloor$-cycle and a $\lceil s/2\rceil$-cycle. For $d>4$,
	define
	\[
	\Delta_{d,s}
	=\Delta_{4,s-2(d-4)}*O_{d-4}.
	\]
	The two cycles in $\Delta_{d,s}$ have lengths
	\[
	a=\lfloor s/2\rfloor-(d-4),
	\qquad
	b=\lceil s/2\rceil-(d-4).
	\]
	Both lengths are at least four.
	Choose maximum independent sets $I_1$ and $I_2$ in these cycles and set
	\[
	E_{d,s}=\{xy:x\in I_1,\ y\in I_2\}.
	\]
	Let $\Gamma_{d,s}$ be the complex obtained from $\Delta_{d,s}$ by
	stellar subdividing all edges in $E_{d,s}$.
\end{definition}

\begin{lemma}
	The complex $\Gamma_{d,s}$ is a flag $(d-1)$-sphere with
	\[
	f_0(\Gamma_{d,s})=s+q_d(s),
	\qquad
	\alpha(\Gamma_{d,s})\geq q_d(s),
	\]
	where
	\[
	q_d(s)=\left\lfloor\frac{s-2d+8}{4}\right\rfloor
	 \left\lfloor\frac{s-2d+9}{4}\right\rfloor .
	\]
\end{lemma}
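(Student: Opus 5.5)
Three things need to be checked for $\Gamma_{d,s}$: that it is a flag $(d-1)$-sphere, that its vertex count is $s+q_d(s)$, and that the new vertices form an independent set of size $q_d(s)$.

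\emph{Sphere and flag.} $\Delta_{4,s'}$ is the join of two cycles, a flag $3$-sphere. Joins of flag spheres are flag spheres, and $O_{d-4}$ is a flag $(d-5)$-sphere, so $\Delta_{d,s}$ is a flag $(d-1)$-sphere. Each edge $xy\in E_{d,s}$ is a face, since $x$ and $y$ lie in different join factors. Stellar subdivision of a face preserves the PL-homeomorphism type, and by \cite{LutzNevo} stellar subdivision at an edge preserves flagness. Subdividing the edges of $E_{d,s}$ one after another, each remaining edge of $E_{d,s}$ is still an edge when its turn comes. The only edge removed by subdividing $e$ is $e$ itself, and all other edges survive. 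Hence $\Gamma_{d,s}$ is a flag $(d-1)$-sphere. This iteration is the one point that needs care.

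\emph{Vertex count.} The vertex set of $\Delta_{d,s}$ consists of $a+b$ cycle vertices and $2(d-4)$ octahedral vertices, so
\[
f_0(\Delta_{d,s})=a+b+2(d-4)=\lfloor s/2\rfloor+\lceil s/2\rceil=s.
\]
One new vertex is added per edge of $E_{d,s}$, and $|E_{d,s}|=|I_1||I_2|=\lfloor a/2\rfloor\lfloor b/2\rfloor$. Write $t=s-2d+8$. Then $a=\lfloor t/2\rfloor$ and $b=\lceil t/2\rceil$. The identity $\lfloor\lfloor t/2\rfloor/2\rfloor=\lfloor t/4\rfloor$ gives $\lfloor a/2\rfloor=\lfloor t/4\rfloor$. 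A check of $t$ modulo $4$ gives $\lfloor\lceil t/2\rceil/2\rfloor=\lfloor (t+1)/4\rfloor$. Therefore $|E_{d,s}|=q_d(s)$ and $f_0(\Gamma_{d,s})=s+q_d(s)$.

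\emph{Independence.} Let $X=\{x_e:e\in E_{d,s}\}$; we show it is independent in $\Gamma_{d,s}$. When $x_e$ is created, its neighbours are the endpoints of $e$ together with the vertices of $\lk(e)$ at that moment, none of which is a new vertex $x_{e'}$.

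It remains to rule out that a later subdivision of $e'$ creates the edge $x_ex_{e'}$. That would require $x_e\in\lk(e')$, that is, $x_e$ adjacent to both endpoints of $e'=x'y'$. The neighbours of $x_e$ among original vertices lie in $e\cup\lk(e)$. Subdivisions never add edges between old vertices and only shrink their neighbourhoods, so it suffices to look at $\Delta_{d,s}$. There, $\lk(xy)$ is the join of the two cycle-neighbours of $x$, the two cycle-neighbours of $y$, and $O_{d-4}$. Thus the cycle vertices in $\{x,y\}\cup\lk(xy)$ are $x$ and its cycle-neighbours in the first cycle, and $y$ and its cycle-neighbours in the second.

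Suppose $x'\in I_1$ and $y'\in I_2$ are both adjacent to $x_e$. Then $x'$ is $x$ or a cycle-neighbour of $x$. Since $I_1$ is independent in the cycle, $x'=x$; similarly $y'=y$, so $e'=e$. Hence no two distinct new vertices are ever adjacent, $X$ is independent, and $\alpha(\Gamma_{d,s})\ge|X|=q_d(s)$.
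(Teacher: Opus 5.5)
Your proof is correct and takes essentially the paper's route: sphere and flag properties from standard facts about joins and edge subdivisions, $|E_{d,s}|=|I_1||I_2|$ from the fact that an $\ell$-cycle has independence number $\lfloor \ell/2\rfloor$, and nonadjacency of the new vertices from the independence of $I_1,I_2$ in their cycles. The paper packs your neighbourhood-tracking into one observation: no two edges of $E_{d,s}$ lie in a common face, so their open stars are disjoint and the subdivisions do not interact. That observation also covers your unproved remark that $\lk(e)$ contains no earlier $x_{e'}$ when $x_e$ is created, which is just your later case with $e$ and $e'$ swapped.
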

\begin{proof}
	The complex $\Delta_{d,s}$ is a flag $(d-1)$-sphere. No two distinct
	edges in $E_{d,s}$ lie in a common face: their endpoints in each cycle
	belong to an independent set. Consequently, the interiors of their
	stars are pairwise disjoint, and the new vertices
	$\{x_e:e\in E_{d,s}\}$ are pairwise nonadjacent. Since edge subdivision
	preserves both the sphere property and flagness, this proves the first
	two assertions.

	A cycle of length $\ell$ has independence number $\lfloor\ell/2\rfloor$.
	Therefore
	\[
	|I_1|=\left\lfloor\frac{s-2d+8}{4}\right\rfloor,
	\qquad
	|I_2|=\left\lfloor\frac{s-2d+9}{4}\right\rfloor,
	\]
	and the formula for $q_d(s)=|I_1||I_2|$ follows.
\end{proof}

\begin{theorem}
	For $d\geq4$,
	\[
	\alpha(d,n)\geq n-4\sqrt n-2d.
	\]
\end{theorem}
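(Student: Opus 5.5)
The plan is to use the spheres $\Gamma_{d,s}$ from the preceding lemma, together with a padding argument that handles every $n$, not only the values $n=s+q_d(s)$. By that lemma, $\Gamma_{d,s}$ is a flag $(d-1)$-sphere with $N(s)=s+q_d(s)$ vertices and $\alpha(\Gamma_{d,s})\geq q_d(s)=N(s)-s$. It therefore suffices to show two things. First, for every sufficiently large $n$ there is a flag $(d-1)$-sphere on exactly $n$ vertices whose independence number is at least $n-s$. Second, this $s$ can be taken with $s\leq 4\sqrt n+2d$. For small $n$ the inequality is trivial, since its right side is then at most $0$ (or at least nonpositive), so only large $n$ matter.

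\emph{Step 1 (padding).} Given $n$, choose the largest $s\geq 2d$ with $N(s)\leq n$. We then add $n-N(s)$ vertices while keeping flagness and keeping the independent set intact. The tool is further stellar subdivisions at edges of $\Gamma_{d,s}$. Each such subdivision preserves flagness and adds exactly one vertex. The subdivided edge should avoid the independent set $\{x_e\}$; for instance, take an edge inside one of the cycles, or an edge of the $O_{d-4}$ factor together with subsequent new edges. The new vertex is adjacent only to the endpoints of the subdivided edge and to its link, and subdividing an edge never joins two previously nonadjacent vertices. Hence the old independent set stays independent, and the independence number stays at least $q_d(s)$. The sphere now has $n$ vertices, so $\alpha\geq q_d(s)=n-(s+(n-N(s)))$. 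We cannot simply add the new vertices to the independent set, so the deficit is $s+(n-N(s))$.

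\emph{Step 2 (estimate).} Set $t=s-2d+8$. Then $q_d(s)\geq \left(\frac{t-3}{4}\right)^2$, and also $q_d(s)\leq (t+1)^2/16$. Since $s$ is maximal, $N(s+1)>n$. The main obstacle is the gap term $n-N(s)\leq N(s+1)-N(s)-1=q_d(s+1)-q_d(s)$, which is about $t/8$ and not $O(1)$; it has to be absorbed into the slack of the constant $4$. From $q_d(s)\leq n$ we get $t\leq 4\sqrt n+O(1)$, so $s\leq 4\sqrt n+2d+O(1)$. Because the deficit also contains the gap of about $\sqrt n/2$, this crude bound gives only roughly $4.5\sqrt n$. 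That is not enough, and it shows the choice of $s$ above is wrong.

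I would fix this by counting more carefully. For given $n$, $s$ should be chosen so that $q_d(s)\approx s^2/16$ matches $n$ in the right way. The deficit is $n-q_d(s)$ wherever the padding comes from, so we want to maximize $q_d(s)$ subject to $s+q_d(s)\leq n$; this gives $s\approx 4\sqrt n$. Concretely, $s^2/16+s\leq n$ holds with $s=4\sqrt{n+4}-8\approx 4\sqrt n-8$. Then the deficit is $n-q_d(s)\leq n-\frac{(s-2d+5)^2}{16}$, and expanding this with $s=4\sqrt n-8+O(1)$ gives a leading term of $n-(\sqrt n-(2d+3)/4)^2\approx \frac{2d+3}{2}\sqrt n$, which is smaller than $4\sqrt n$ once $d$ is small. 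That does not hold for all $d$, so for large $d$ I would rather split the budget as $n=s+q+$padding and optimize. The key inequality is the elementary one: since $q_d$ is a product of two near-equal integers of size $\approx(s-2d+8)/4$, the choice $s=\lceil 4\sqrt n\rceil$ gives $q_d(s)\geq n-\dots$ by $(\sqrt n-(2d-5)/4)^2\approx n-\frac{2d-5}{2}\sqrt n$. This is the computation I expect to need care to land exactly on $4\sqrt n+2d$. If necessary, I would restrict padding to $s+q_d(s)\leq n$ with $s$ minimal such that $q_d(s)\geq n-4\sqrt n-2d$, and verify that $s+q_d(s)\leq n$ still holds.
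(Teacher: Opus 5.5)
Your proposal does not prove the statement for general $n$. The obstruction is the padding idea itself, not the bookkeeping. Every vertex added by an extra edge subdivision lies outside the certified independent set, so the deficit is $s+(n-N(s))$, and $n-N(s)$ is not negligible. When $s$ increases by one, $q_d(s)$ either stays the same or jumps by $\lfloor (s-2d+8)/4\rfloor$ or by one more than that. That is a jump of about $\sqrt n$, not $t/8$, so the naive deficit can approach $5\sqrt n$.

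None of your repairs gets around this:
\begin{itemize}
\item The estimate $\frac{2d+3}{2}\sqrt n$ uses $s^2/16$ for the vertex count but $(s-2d+5)^2/16$ for the independent set. In any case $\frac{2d+3}{2}\ge\frac{11}{2}>4$ for every $d\ge4$, so it is never ``smaller than $4\sqrt n$''.
\item The choice $s=\lceil 4\sqrt n\rceil$ makes $\Gamma_{d,s}$ too large; for $d=4$ it has about $n+4\sqrt n$ vertices. Padding can only add vertices, never remove them.
\item Your final fallback is false. Take $d=4$ and $n=498$. Since $4\sqrt{498}+8\approx 97.3$, you need $\alpha\ge 401$. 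But $q_4(82)=400$ with $N(82)=482$, while $q_4(83)=420$ with $N(83)=503>498$. So the minimal $s$ with $q_4(s)\ge n-4\sqrt n-8$ violates $s+q_4(s)\le n$, and the best padded $\Gamma_{4,s}$ only certifies $\alpha\ge 400$.
\end{itemize}

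The missing idea is to interpolate inside the independent set rather than outside it: subdivide only a subset $E'\subseteq E_{d,s}$. The argument of the preceding lemma applies verbatim to any such subset, because no two edges of $E_{d,s}$ lie in a common face. So for every $n$ with $s\le n\le s+q_d(s)$ you get a flag $(d-1)$-sphere on $n$ vertices whose $n-s$ new vertices are pairwise nonadjacent. Hence $\alpha\ge n-s$.

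Now choose $s\ge 2d$ minimal with $s+q_d(s)\ge n$.
\begin{itemize}
\item If $s>2d$, minimality gives $q_d(s-1)<n$. Combined with $q_d(s-1)\ge\left(\frac{s-2d+4}{4}\right)^2$, this yields $s<4\sqrt n+2d-4$.
\item If $s=2d$, then $n\le 2d+4$ and the bound is trivial.
\end{itemize}

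For comparison, the paper's one-line proof only treats $n=f_0(\Gamma_{d,s})$. It derives $s\le 4\sqrt N+2d$ from $q_d(s)\ge\left(\frac{s-2d+5}{4}\right)^2$, which is exactly your zero-padding case. You were right that arbitrary $n$ needs an extra interpolation step, but padding with vertices outside the independent set is the wrong one.
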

\begin{proof}
	Fix $d\geq4$ and consider the family of flag $(d-1)$-spheres
	$\Gamma_{d,s}$. Let $N=f_0(\Gamma_{d,s})$. Since
	$\alpha(\Gamma_{d,s})\geq N-s$, it suffices to show that
	$s\leq4\sqrt N+2d$. This follows immediately from the preceding formula
	for $q_d(s)$.
\end{proof}

\begin{proposition}\label{prop:exact-construction}
	For every $d\geq4$ and $t\geq2$,
	\[
	\alpha\!\left(\Gamma_{d,\,4t+2(d-4)}\right)=t^2.
	\]
\end{proposition}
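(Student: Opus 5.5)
Since $s=4t+2(d-4)$, the two cycles in $\Delta_{d,s}$ both have length $a=b=2t$. Hence $|I_1|=|I_2|=t$, and the preceding lemma gives $\alpha(\Gamma_{d,s})\ge q_d(s)=t^2$. It remains to prove the upper bound $\alpha(\Gamma_{d,s})\le t^2$. I would do this by describing the graph of $\Gamma_{d,s}$ explicitly and then doing a short case analysis on an arbitrary independent set $J$.

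\emph{Step 1: the graph.} Write the cycles as $C_1,C_2$, with vertices indexed cyclically so that $I_1,I_2$ consist of the even-indexed vertices. Before subdivision, every vertex of $C_1$ is joined to every vertex of $C_2$, and every octahedral vertex is joined to every other vertex except its antipode.
\begin{itemize}
\item For $e=xy\in E_{d,s}$, we have $\lk(e,\Delta_{d,s})=\{x\pm1\}*\{y\pm1\}*O_{d-4}$ (indices taken in the respective cycles). So $x_e$ is adjacent exactly to $x,y,x\pm1,y\pm1$ and to all octahedral vertices.
\item The cross edges between $C_1$ and $C_2$ that survive are all pairs except those of $I_1\times I_2$.
\item Octahedral vertices remain adjacent to everything except their antipodes.
\end{itemize}
Consequently, if $J$ contains an octahedral vertex, then $|J|\le 2\le t^2$. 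So we may assume $J\subseteq C_1\cup C_2\cup\{x_e\}$. Write $A=J\cap C_1$, $B=J\cap C_2$ and $X=J\cap\{x_e\}$.

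\emph{Step 2: case $A,B$ both nonempty.} Every pair $(a,b)\in A\times B$ must be a deleted cross edge, so $A\subseteq I_1$ and $B\subseteq I_2$. A new vertex $x_{xy}$ is adjacent to $x$, and $x\pm1\notin I_1$, so $x_{xy}\in X$ forces $x\notin A$; likewise it forces $y\notin B$. With $p=|A|\ge1$ and $r=|B|\ge1$ this gives
\[
|J|\le (t-p)(t-r)+p+r .
\]
This is at most $t^2$ iff $pr\le (t-1)(p+r)$. That inequality holds for $1\le p,r\le t$ and $t\ge2$: if $r\le t-1$ it is immediate, and if $r=t$ it reduces to $p\le t(t-1)$.

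\emph{Step 3: case $B=\emptyset$ (and symmetrically $A=\emptyset$).} Group the new vertices as $G_x=\{x_{xy}:y\in I_2\}$ for $x\in I_1$; each group has size $t$. Suppose $J$ meets $k$ of these groups. Then $|X|\le kt$, and $A$ avoids the closed neighbourhoods $\{x-1,x,x+1\}$ of those $k$ points $x$. The pairs $\{x,x+1\}$ with $x\in I_1$ partition $C_1$. An independent set meets each pair at most once, and the $k$ pairs belonging to the hit groups are entirely excluded from $A$. Hence $|A|\le t-k$, and
\[
|J|\le kt+(t-k)\le t^2 .
\]

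The main point to get right is Step 1, the precise adjacency of the new vertices $x_e$. In particular one must check that $x_{xy}$ is \emph{not} adjacent to other new vertices; this holds because distinct subdivided edges share no face, as noted in the preceding lemma. One must also check that the cross edges $xy\in I_1\times I_2$ are genuinely removed by the subdivisions. Once this description is in place, the counting in Steps 2 and 3 is elementary.
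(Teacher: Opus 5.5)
Your proof is correct, but it reaches the upper bound by a different route from the paper. The paper first treats $d=4$ and partitions $V(\Gamma_{4,4t})$ into exactly $t^2$ cliques: $\{x_{ii},a_i,a'_i,b'_i\}$ and $\{x_{i+1,i},b_i\}$ for $1\le i\le t$ (indices mod $t$), plus a singleton for each remaining new vertex. Since an independent set meets each clique at most once, $\alpha\le t^2$ follows immediately. For $d>4$ the paper observes that subdividing the edges of $E_{d,s}$ commutes with joining by $O_{d-4}$, so $\Gamma_{d,4t+2(d-4)}=\Gamma_{4,4t}*O_{d-4}$. It then uses that the independence number of a join is the maximum over its factors.

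You instead determine the whole graph of $\Gamma_{d,s}$ and bound an arbitrary independent set by a case split: an octahedral vertex is present, vertices come from both cycles, or vertices come from at most one cycle. This handles the octahedral part directly rather than through the join identity.

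The paper's clique partition is a shorter, uniform certificate. One only checks that the listed sets are cliques (the four-element ones are even facets) and that they cover every vertex, so no non-adjacency information is needed. Your route depends on the exact neighbourhoods of the $x_e$ and on the subdivisions not interacting. You correctly trace the latter to the fact that no two edges of $E_{d,s}$ lie in a common face.

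In exchange, your analysis says more about the structure of independent sets. For instance, Step 2 shows that an independent set using vertices of both cycles lies in $I_1\cup I_2$ together with new vertices over the complementary grid. The inequality $pr\le(t-1)(p+r)$ also shows exactly where $t\ge2$ is needed.
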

\begin{proof}
	First consider $d=4$. Write the two $2t$-cycles in alternating order as
	\[
	a_1,a'_1,a_2,a'_2,\dots,a_t,a'_t
	\quad\text{and}\quad
	b_1,b'_1,b_2,b'_2,\dots,b_t,b'_t,
	\]
	and subdivide every edge $a_ib_j$. Denote the new vertex by $x_{ij}$.
	The set $\{x_{ij}:1\leq i,j\leq t\}$ is independent, so the independence
	number is at least $t^2$.

	For the reverse inequality, indices are read modulo $t$. Cover the
	vertices by the following $t^2$ cliques:
	\[
	\{x_{ii},a_i,a'_i,b'_i\},\qquad
	\{x_{i+1,i},b_i\}\qquad(1\leq i\leq t),
	\]
	together with one singleton clique for every remaining $x_{ij}$.
	An independent set meets each clique in at most one vertex, so its size
	is at most $t^2$.

	For $d>4$, edge subdivision commutes with taking the join with
	$O_{d-4}$, and hence
	\[
	\Gamma_{d,\,4t+2(d-4)}=\Gamma_{4,4t}*O_{d-4}.
	\]
	The independence number of a join is the maximum of the independence
	numbers of its factors. Since $\alpha(O_{d-4})=2\leq t^2$, the result
	follows.
\end{proof}

\subsection{The case of $\alpha(4, n)$ and $\alpha(5,n)$}
We now turn to upper bounds on $\alpha(d,n)$. Throughout this subsection,
whenever $\Delta$ is a flag $(d-1)$-sphere and $I$ is an independent set, we
write $A=\Delta\setminus I$. We begin with an inequality valid for every flag
$(d-1)$-sphere.

\begin{lemma}\label{lm:general-inequality}
	Let $\Delta$ be a flag $(d-1)$-sphere and let $I$ be an independent set.
	Then
	\[
	\frac{4f_1(A)^2}{f_0(A)}
	\leq f_0(A)f_1(A)+3f_2(A).
	\]
\end{lemma}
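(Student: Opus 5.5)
The plan is to prove the inequality by a purely graph-theoretic double count on the $1$-skeleton of $A$. The only topological input is that $A$ is flag. We never use that $\Delta$ is a sphere.

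\textbf{Step 1: $A$ is flag.} Since $A=\Delta\setminus I$ is the induced subcomplex $\Delta[V\setminus I]$, a set of pairwise adjacent vertices of $A$ is a clique in the graph of $\Delta$. Because $\Delta$ is flag, such a set is a face of $\Delta$, and it avoids $I$, so it is a face of $A$. In particular, the $2$-faces of $A$ are exactly the triangles of its graph $G=A^{(1)}$. Write $N(v)$ for the neighborhood of $v$ in $G$ and $\deg v=|N(v)|$.

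\textbf{Step 2: count triangles through edges.} For each edge $uv$ of $G$, the triangles containing $uv$ correspond bijectively to the common neighbors $N(u)\cap N(v)$. Each triangle has three edges, so
\[
3f_2(A)=\sum_{uv\in G}|N(u)\cap N(v)|.
\]
Since $N(u)\cup N(v)\subseteq V(A)$, inclusion--exclusion gives
\[
|N(u)\cap N(v)|\ge \deg u+\deg v-f_0(A).
\]

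\textbf{Step 3: sum over edges and apply Cauchy--Schwarz.} Summing the bound from Step 2 over all edges gives
\[
3f_2(A)\ge \sum_{uv\in G}(\deg u+\deg v)-f_0(A)f_1(A).
\]
The first sum equals $\sum_{v}\deg(v)^2$, because each vertex $v$ contributes $\deg v$ once for each of its $\deg v$ incident edges. By Cauchy--Schwarz and the handshake identity $\sum_v\deg v=2f_1(A)$,
\[
\sum_v\deg(v)^2\ge \frac{(2f_1(A))^2}{f_0(A)}=\frac{4f_1(A)^2}{f_0(A)}.
\]
Rearranging yields the stated inequality.

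\textbf{Expected obstacle.} There is no real obstacle. The only point needing care is Step 1: the triangle count must be $f_2(A)$ rather than merely the number of graph triangles, and this is exactly where flagness of $\Delta$, inherited by the induced subcomplex $A$, is used. One should also note that $f_0(A)>0$, which holds because $I$ is independent and so cannot contain all vertices of a sphere. Hence the division is legitimate.
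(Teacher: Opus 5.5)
Your proof is correct and is essentially the paper's argument. The paper writes the same per-edge inclusion--exclusion as $f_0(\lk(u,A))+f_0(\lk(v,A))=f_0(\lk(u,A)\cup\lk(v,A))+f_0(\lk(uv,A))$, using the link condition for the flag complex $A$ where you use common neighbors and triangles; it then bounds the union by $f_0(A)$, sums over edges to get $\sum_u f_0(\lk(u,A))^2\le f_0(A)f_1(A)+3f_2(A)$, and finishes with Cauchy--Schwarz exactly as you do.
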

\begin{proof}
	The induced subcomplex $A$ is flag and hence satisfies the link condition.
	For every edge $uv\in A$,
	\begin{equation}\label{eq:key}
		f_0(\lk(u,A))+f_0(\lk(v,A))
		=f_0(\lk(u,A)\cup\lk(v,A))+f_0(\lk(uv,A)).
	\end{equation}
	Summing \eqref{eq:key} over all edges of $A$ and using
	$f_0(\lk(u,A)\cup\lk(v,A))\leq f_0(A)$ gives
	\[
	\sum_{u\in V(A)}f_0(\lk(u,A))^2
	\leq f_0(A)f_1(A)+3f_2(A).
	\]
	Since $\sum_{u\in V(A)}f_0(\lk(u,A))=2f_1(A)$, the result follows
	from the Cauchy--Schwarz inequality.
\end{proof}

For $d=4,5$, the Dehn--Sommerville relations provide enough control over
$f_2(A)$ to improve on the Kruskal--Katona estimate used in
Lemma~\ref{thm:beta-upper}.

\begin{lemma}\label{lm:edge-bound}
	Let $d=4$ or $5$, let $\Delta$ be a flag $(d-1)$-sphere, and let $I$
	be an independent set. Then
	\[
	4f_1(A)\leq
	\begin{cases}
		(f_0(A)+6)f_0(A),&d=4,\\
		(f_0(A)+12)f_0(A),&d=5.
	\end{cases}
	\]
\end{lemma}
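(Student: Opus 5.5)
The plan is to combine Lemma~\ref{lm:general-inequality} with an upper bound on $f_2(A)$ that is linear in $f_1(A)$ and $f_0(A)$. That bound comes from the Dehn--Sommerville relations for $\Delta$ together with lower bounds on the vertex links of the vertices in $I$. Since $I$ is independent, the stars of its vertices meet only along $A$, so for every $i$
\[
f_i(\Delta)=f_i(A)+\sum_{v\in I}f_{i-1}(\lk(v,\Delta)),
\]
exactly as in the proof of Lemma~\ref{thm:beta-upper}. Write $k_v=f_0(\lk(v,\Delta))$. Each link is a flag $(d-2)$-sphere, so $k_v\ge 2(d-1)$ by the octahedral lower bound.

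\emph{Case $d=4$.} For a simplicial $3$-sphere the Dehn--Sommerville relations give $f_2(\Delta)=2f_1(\Delta)-2f_0(\Delta)$. Each link is a $2$-sphere with $k_v$ vertices, so it has $f_1=3k_v-6$. Substituting $f_0(\Delta)=f_0(A)+|I|$, $f_1(\Delta)=f_1(A)+\sum k_v$ and $f_2(\Delta)=f_2(A)+\sum(3k_v-6)$, I expect to obtain
\[
f_2(A)=2f_1(A)-2f_0(A)-\sum_{v\in I}(k_v-4)\le 2f_1(A)-2f_0(A).
\]
Plugging this into Lemma~\ref{lm:general-inequality} gives
\[
\frac{4f_1(A)^2}{f_0(A)}\le f_0(A)f_1(A)+6f_1(A)-6f_0(A)\le (f_0(A)+6)f_1(A).
\]
Dividing by $f_1(A)/f_0(A)$ yields the claim. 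The case $f_1(A)=0$ is trivial.

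\emph{Case $d=5$.} For a $4$-sphere the relevant relation is $f_2=4f_1-10f_0+20$, which can be checked on $\partial\Delta^5$ and follows from $h_1=h_4$ and $h_2=h_3$. Each link $\lk(v,\Delta)$ is a flag $3$-sphere. By part 2 of the flag face-number lemma with $d=4$, it satisfies $f_1(\lk(v,\Delta))\ge 5k_v-16$. The same substitution should give
\[
f_2(A)\le 4f_1(A)-10f_0(A)+20-\sum_{v\in I}(k_v-6)\le 4f_1(A)-10f_0(A)+20,
\]
using $k_v\ge 8$. Lemma~\ref{lm:general-inequality} then gives $4f_1(A)^2/f_0(A)\le f_0(A)f_1(A)+12f_1(A)-30f_0(A)+60$. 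Since $f_0(A)\ge2$ (indeed $A$ contains a facet's worth of vertices off $I$), the last two terms sum to a nonpositive number, and we conclude $4f_1(A)\le (f_0(A)+12)f_0(A)$.

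The main obstacle is bookkeeping rather than ideas. One must get the exact Dehn--Sommerville expression for $f_2$ in terms of $f_0$ and $f_1$ right, and check the sign of each correction term. In particular, for $d=5$ the link bound $f_1\ge 5f_0-16$ is what makes the per-vertex correction $-(k_v-6)$ nonpositive. I would double-check both the $d=4$ and $d=5$ computations on the octahedral spheres and on the simplex boundary before finalizing.
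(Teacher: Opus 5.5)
Your proof is correct.

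\textbf{The case $d=5$.} This is essentially the paper's argument: the same decomposition $f_i(\Delta)=f_i(A)+\sum_{v\in I}f_{i-1}(\lk(v,\Delta))$, the relation $f_2=4f_1-10f_0+20$, Davis--Okun on the links, and $k_v\ge 8$. After substituting $n=f_0(A)+|I|$, the paper's intermediate bound $4f_1(A)-10n+20-\sum_{v\in I}k_v+16|I|$ is exactly your $4f_1(A)-10f_0(A)+20-\sum_{v\in I}(k_v-6)$. The only difference is when the constant is discarded. The paper drops it before invoking Lemma~\ref{lm:general-inequality}, using $|I|\le n$ and $n\ge 10$ to get $f_2(A)\le 4f_1(A)$. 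You drop $-30f_0(A)+60$ afterwards, via $f_0(A)\ge 2$.

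\textbf{The case $d=4$.} Here your route genuinely differs. The paper works locally:
\begin{itemize}
\item for each $u\in V(A)$ it writes $f_1(\lk(u,A))$ as $3f_0(\lk(u,\Delta))-6$ minus the edges lost with the vertices of $I$;
\item it bounds each loss from below using that edge links in a flag $3$-sphere are cycles of length at least four;
\item summing over $u$ gives $3f_2(A)\le 6f_1(A)-6f_0(A)-\sum_{v\in I}k_v$.
\end{itemize}
You instead use the global relation $f_2=2f_1-2f_0$ for $3$-spheres together with $f_1=3k_v-6$ for the $2$-sphere links of $I$. This yields the exact identity $f_2(A)=2f_1(A)-2f_0(A)-\sum_{v\in I}(k_v-4)$.

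\textbf{What each route buys.} Your version handles both dimensions by one uniform method. It gives an equality rather than an inequality, which is at least as strong as the paper's bound since $k_v\ge 6$. For this step it needs only $k_v\ge 4$, so flagness enters solely through Lemma~\ref{lm:general-inequality}. The paper's local count never passes through the face numbers of $\Delta$, but it does invoke the structure of edge links in a flag $3$-sphere. Either way one reaches $3f_2(A)\le 6f_1(A)$ and the stated bound.
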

\begin{proof}
	First assume $d=4$. For $u\in V(A)$, the complex $\lk(u,A)$ is
	obtained from the flag 2-sphere $\lk(u,\Delta)$ by deleting the
	independent set $I\cap V(\lk(u,\Delta))$. Every edge link in a flag
	3-sphere is a cycle of length at least four. Therefore
	\begin{align*}
	f_1(\lk(u,A))
	&=3f_0(\lk(u,\Delta))-6
	-\sum_{\substack{v\in I\\uv\in\Delta}}f_0(\lk(uv,\Delta))\\
	&\leq3f_0(\lk(u,A))-6-|I\cap V(\lk(u,\Delta))|.
	\end{align*}
	Summing over $u\in V(A)$ gives
	\[
	3f_2(A)
	\leq6(f_1(A)-f_0(A))
	-\sum_{v\in I}f_0(\lk(v,\Delta))
	\leq6f_1(A).
	\]
	Lemma~\ref{lm:general-inequality} now yields
	$4f_1(A)\leq(f_0(A)+6)f_0(A)$.

	Now assume $d=5$ and set $n=f_0(\Delta)$. The Dehn--Sommerville
	relations give $f_2(\Delta)=4f_1(\Delta)-10f_0(\Delta)+20$. Hence
	\begin{align}
	f_2(A)
	&=f_2(\Delta)-\sum_{v\in I}f_1(\lk(v,\Delta))\notag\\
	&=4f_1(A)-10n+20
	-\sum_{v\in I}(f_1-4f_0)(\lk(v,\Delta))\notag\\
	&\leq4f_1(A)-10n+20
	-\sum_{v\in I}f_0(\lk(v,\Delta))+16|I|\notag\\
	&\leq4f_1(A)-10n+20+8|I|
	\leq4f_1(A).\label{eq:d5-f2}
	\end{align}
	The first inequality uses the Davis--Okun inequality
	$\gamma_2=f_1-5f_0+16\geq0$ for flag 3-spheres. The second uses
	$f_0(\lk(v,\Delta))\geq8$, and the last uses $|I|\leq n$ and
	$n\geq10$. Applying Lemma~\ref{lm:general-inequality} to
	\eqref{eq:d5-f2} gives
	$4f_1(A)\leq(f_0(A)+12)f_0(A)$.
\end{proof}

\begin{theorem}
	For $d=4,5$,
	\[
	\alpha(d,n)\leq n-\sqrt{12n}+O(1).
	\]
\end{theorem}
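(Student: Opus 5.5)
The plan is to combine Lemma~\ref{lm:edge-bound}, which gives $f_1(A)\le \tfrac14 f_0(A)^2+O(f_0(A))$, with a linear inequality of the form
\[
3|I|\le f_1(A)+O(f_0(A)).
\]
Here $\Delta$ is a flag $(d-1)$-sphere on $n$ vertices, $I$ is a maximum independent set, $A=\Delta\setminus I$, and $k=f_0(A)=n-|I|$. If both inequalities hold, then
\[
n=k+|I|\le k+\frac{k^2}{12}+O(k),
\]
so $k\ge\sqrt{12n}-O(1)$, which is the claim. The linear inequality should come from counting facets in two ways using the Dehn--Sommerville relations. The point is that facets of $\Delta$ either lie in $A$ or contain exactly one vertex of $I$.

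\textbf{The case $d=4$.} For a $3$-sphere the Dehn--Sommerville relations give $f_3(\Delta)=f_1(\Delta)-f_0(\Delta)$. For $v\in I$, the link $\lk(v,\Delta)$ is a flag $2$-sphere with $\deg v$ vertices, so it has $2\deg v-4$ triangles. This gives
\[
f_3(\Delta)=f_3(A)+\sum_{v\in I}(2\deg v-4).
\]
Every edge of $\Delta$ lies in $A$ or has exactly one endpoint in $I$, so
\[
f_1(\Delta)=f_1(A)+\sum_{v\in I}\deg v.
\]
Substituting both into the Dehn--Sommerville identity yields
\[
\sum_{v\in I}\deg v=f_1(A)-f_3(A)-k+3|I|\le f_1(A)-k+3|I|.
\]
A flag $2$-sphere has at least $6$ vertices, so $\deg v\ge 6$. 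Hence $3|I|\le f_1(A)-k$. Combining this with $4f_1(A)\le(k+6)k$ from Lemma~\ref{lm:edge-bound} gives $|I|\le (k^2+2k)/12$, and the argument above finishes the case.

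\textbf{The case $d=5$.} I would run the same scheme. Express $f_4(\Delta)$ and $f_3(\Delta)$ through $f_0$ and $f_1$ via the Dehn--Sommerville relations for $4$-spheres. Write $f_4(\Delta)=f_4(A)+\sum_{v\in I}f_3(\lk(v,\Delta))$. For the flag $3$-sphere $\lk(v,\Delta)$, use $f_3=f_1-f_0$. Then estimate $f_1(\lk v)$ from below by the Davis--Okun inequality $f_1\ge 5f_0-16$ together with $\deg v\ge 8$. Alternatively, one can start from the computation \eqref{eq:d5-f2}. That gives
\[
f_2(A)=4f_1(A)-10n+20-\sum_{v\in I}(f_1-4f_0)(\lk v),
\]
and this would then need to be paired with a lower bound on $f_2(A)$ in terms of $\sum_{v\in I}\deg v$. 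The natural source for such a lower bound is the fact that each edge $uv$ with $u\in A$, $v\in I$ has a link $\lk(uv,\Delta)$ that is a flag $2$-sphere contained in $A$, so it contributes many triangles $u*e$ of $A$.

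\textbf{Main obstacle.} The expected difficulty is the $d=5$ step: obtaining the coefficient $3$ in $3|I|\le f_1(A)+O(k)$. Naively discarding $f_2(A)\ge 0$ only yields $|I|\lesssim k^2/2$, i.e.\ $k\gtrsim\sqrt{2n}$. So the multiplicity with which a triangle of $A$ arises from different edges $uv$ must be controlled. That multiplicity is bounded via the cycle $\lk(T,\Delta)$ of a triangle $T$, in which at most half of the vertices lie in $I$. The bookkeeping must lose only $O(k)$.
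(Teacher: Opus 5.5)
Your $d=4$ argument is correct and matches the paper's. The paper double-counts ridges of $A$ via \eqref{eq:d-even} rather than facets, but after Dehn--Sommerville the two identities coincide. Both discard $f_3(A)\ge 0$ and reach $f_1(A)\ge k+3|I|$ before invoking Lemma~\ref{lm:edge-bound}.

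For $d=5$ your write-up stops short, but the ``main obstacle'' you flag is not real. The first scheme you list already gives the coefficient $3$, with no control of triangle multiplicities. For a $4$-sphere, $f_4=2f_1-6f_0+12$. Substitute $f_4(\Delta)=f_4(A)+\sum_{v\in I}f_3(\lk v)$, $f_1(\Delta)=f_1(A)+\sum_{v\in I}\deg v$ and $f_0(\Delta)=k+|I|$. This gives
\[
\sum_{v\in I}\bigl(f_3(\lk v)-2\deg v+6\bigr)=2f_1(A)-6k+12-f_4(A)\le 2f_1(A)-6k+12 .
\]
By Davis--Okun, $f_3(\lk v)=f_1(\lk v)-\deg v\ge 4\deg v-16$. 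So each summand is at least $2\deg v-10\ge 6$, which gives $3|I|\le f_1(A)-3k+6$, i.e.\ $f_1(A)\ge 3n-6$. Combined with $4f_1(A)\le k^2+12k$, this yields $12|I|\le k^2+24$, hence $k\ge\sqrt{12n+12}-6$.

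This is exactly the paper's $d=5$ computation. Its link summand $\tfrac12f_3+2g_2+g_1$ in \eqref{eq:d-odd} equals $\tfrac52(f_1-3f_0+6)$, which is $\tfrac52$ times yours, and its left side is $\tfrac52(2f_1(A)-6k+12)$. Your alternative route through \eqref{eq:d5-f2} looks in the wrong place. In the paper that identity bounds $f_2(A)$ from above for Lemma~\ref{lm:edge-bound}; it is not the source of the linear lower bound on $f_1(A)$.
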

\begin{proof}
	Let $\Delta$ be an $n$-vertex flag $(d-1)$-sphere and let $I$ be a
	maximum independent set.

	First let $d=4$. Every flag 2-sphere has at least eight facets and
	$g_1\geq2$. Thus \eqref{eq:d-even} gives
	\[
	2f_1(A)-2f_0(A)
	\geq\sum_{v\in I}\left(\frac12f_2+g_1\right)(\lk(v,\Delta))
	\geq6|I|.
	\]
	Hence $f_1(A)\geq f_0(A)+3|I|=n+2|I|$. Combining this with
	Lemma~\ref{lm:edge-bound} and solving the resulting quadratic
	inequality gives
	\[
	|I|\leq n+7-\sqrt{12n+49}.
	\]

	Now let $d=5$. For a flag 3-sphere $L$, we have
	$f_3(L)\geq16$, $g_1(L)=f_0(L)-5\geq3$, and
	\[
	g_2(L)=\gamma_2(L)+f_0(L)-6\geq2
	\]
	by the Davis--Okun inequality. It follows from
	\eqref{eq:d-odd} that
	\begin{align*}
	5(f_1(A)-3f_0(A)+6)
	&\geq\sum_{v\in I}
	\left(\frac12f_3+2g_2+g_1\right)(\lk(v,\Delta))\\
	&\geq15|I|.
	\end{align*}
	Therefore $f_1(A)\geq3f_0(A)-6+3|I|=3n-6$.
	Lemma~\ref{lm:edge-bound} now gives
	\[
	4(3n-6)\leq f_0(A)(f_0(A)+12).
	\]
	Thus $f_0(A)\geq\sqrt{12n+12}-6$, or equivalently,
	$|I|\leq n+6-\sqrt{12n+12}$.
\end{proof}

\section{The case of Bier spheres}

Bier spheres are constructed as follows. Let
$E=[m]=\{1,\dots,m\}$ be a ground set, and let $K\subsetneq 2^E$ be a
simplicial complex with $E\notin K$. Recall that its \emph{Alexander dual} is
\[
    K^\vee
    =\bigl\{B\subseteq E:E\setminus B\notin K\bigr\}.
\]
Let $\overline E=\{\bar 1,\dots,\bar m\}$ be a disjoint barred copy of $E$.

\begin{definition}
The \emph{Bier complex} of $K$ is the deleted join
\[
    \Bier(K)=K*_{\Delta}K^\vee.
\]
More explicitly, its faces are
\[
    A\sqcup\overline B,
    \qquad
    A\in K,\quad B\in K^\vee,\quad A\cap B=\varnothing,
\]
where $\overline B=\{\bar i:i\in B\}$.
\end{definition}

Bier's theorem asserts that $\Bier(K)$ is a combinatorial sphere of dimension
$m-2$; see \cite{BjornerPaffenholzSjostrandZiegler2005}.  Some of the $2m$
formal labels may fail to be actual vertices:
\[
 \begin{aligned}
   i\in V\bigl(\Bier(K)\bigr)
       &\Longleftrightarrow \{i\}\in K,\\
   \bar i\in V\bigl(\Bier(K)\bigr)
       &\Longleftrightarrow \{i\}\in K^\vee
        \Longleftrightarrow E\setminus\{i\}\notin K.
 \end{aligned}
\]
We always count actual vertices rather than absent or ``ghost'' labels.

Note that when we construct Bier spheres from a complex $K$ one uses two disjoint copies of the ground set [m]:
\[
[m]^+=\{1,\ldots,m\},
\qquad
[m]^-=\{\bar1,\ldots,\bar m\}.
\]
We use the following terminology for one of the two sides of the deleted join:
the \emph{positive or unbarred shore} consists of the vertices $i^+$ coming from $K$.
The \emph{negative or barred shore} consists of the vertices $\bar i$ coming from $K^\vee$.

The graph of a Bier sphere has a particularly transparent form and we can calculate explicitly its independence numbers.

\begin{proposition}\label{prop:bier-edges}
Whenever the relevant endpoints are actual vertices of $\Bier(K)$, its edges
are described as follows:
\begin{enumerate}
    \item $i$ and $j$ are adjacent if and only if $\{i,j\}\in K$;
    \item $\bar i$ and $\bar j$ are adjacent if and only if
          $\{i,j\}\in K^\vee$;
    \item $i$ and $\bar j$ are adjacent whenever $i\neq j$;
    \item $i$ and $\bar i$ are never adjacent.
\end{enumerate}
\end{proposition}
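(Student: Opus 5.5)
The plan is to read each adjacency directly off the face description of $\Bier(K)$. Two actual vertices are adjacent exactly when the two-element set they form is a face, that is, has the form $A\sqcup\overline B$ with $A\in K$, $B\in K^\vee$ and $A\cap B=\varnothing$. Since every face of $\Bier(K)$ splits uniquely into an unbarred part and a barred part, each pair type forces a specific choice of $A$ and $B$. I will handle the four types of pairs separately, using throughout that the empty set lies in both $K$ and $K^\vee$. For $K^\vee$ this holds because $E\notin K$.

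For item 1, the pair $\{i,j\}$ with both labels unbarred must be $A=\{i,j\}$, $B=\varnothing$. The disjointness condition is vacuous, so the pair is an edge exactly when $\{i,j\}\in K$. For item 2, similarly $A=\varnothing$ and $B=\{i,j\}$, so the pair is an edge exactly when $\{i,j\}\in K^\vee$.

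For the mixed pairs, the pair $\{i,\bar j\}$ must be $A=\{i\}$, $B=\{j\}$. The hypothesis that $i$ and $\bar j$ are actual vertices means $\{i\}\in K$ and $\{j\}\in K^\vee$, by the vertex criterion stated just before the proposition. The face condition therefore reduces to $\{i\}\cap\{j\}=\varnothing$, i.e.\ $i\neq j$. This gives item 3. The same computation with $j=i$ gives item 4, since the disjointness condition fails.

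There is no real obstacle here. The one point needing care is the uniqueness of the decomposition $A\sqcup\overline B$, which is what pins down $A$ and $B$ in each case. It holds because $E$ and $\overline E$ are disjoint. The other is remembering that the vertex hypotheses supply exactly the singleton memberships required in items 3 and 4.
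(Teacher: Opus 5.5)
Your proposal is correct and takes essentially the same approach as the paper, which also reads each adjacency directly off the face description $A\sqcup\overline B$: the unbarred and barred pairs come from the two factors, and a mixed pair $\{i,\bar j\}$ is a face exactly when $\{i\}\cap\{j\}=\varnothing$. Your version adds details the paper leaves implicit, namely $\varnothing\in K^\vee$ because $E\notin K$, uniqueness of the decomposition, and the singleton memberships supplied by the vertex criterion.
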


\begin{proof}
The first two statements follow directly from the two factors in the deleted
join.  A mixed pair $\{i,\bar j\}$ is a face precisely when the two underlying
labels are disjoint, which is equivalent to $i\neq j$.
\end{proof}

Let $G_K$ and $G_{K^\vee}$ denote the graphs of the two factors, restricted to
their actual vertices.  Proposition~\ref{prop:bier-edges} immediately gives
the following formula.

\begin{proposition}\label{prop:alpha-formula}
Define
\[
 \varepsilon(K)=
 \begin{cases}
  2,&\text{if both $i$ and $\bar i$ occur for some $i\in E$},\\
  0,&\text{otherwise}.
 \end{cases}
\]
Then
\[
 \alpha\bigl(\Bier(K)\bigr)
 =\max\bigl\{\alpha(G_K),\alpha(G_{K^\vee}),\varepsilon(K)\bigr\}.
\]
In particular, if all $2m$ vertices occur, then
\[
 \alpha\bigl(\Bier(K)\bigr)
 =\max\bigl\{2,\alpha(G_K),\alpha(G_{K^\vee})\bigr\}.
\]
\end{proposition}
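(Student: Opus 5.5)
We need to prove Proposition \ref{prop:alpha-formula}: $\alpha(\Bier(K))=\max\{\alpha(G_K),\alpha(G_{K^\vee}),\varepsilon(K)\}$. We argue purely from the edge description in Proposition~\ref{prop:bier-edges}, proving the two inequalities separately.

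\emph{Lower bound.} Any independent set of $G_K$ consists of unbarred actual vertices that are pairwise non-adjacent in $K$. By item 1 of Proposition~\ref{prop:bier-edges}, such a set remains independent in $\Bier(K)$. In the same way, item 2 shows that independent sets of $G_{K^\vee}$ remain independent in $\Bier(K)$. If both $i$ and $\bar i$ are actual vertices, then $\{i,\bar i\}$ is independent by item 4. This gives $\alpha(\Bier(K))\ge\max\{\alpha(G_K),\alpha(G_{K^\vee}),\varepsilon(K)\}$.

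\emph{Upper bound.} Let $S$ be an independent set of $\Bier(K)$ and write $S=S^+\sqcup\overline{S^-}$ according to the two shores. If $S^-=\varnothing$, then $S$ is an independent set of $G_K$ by item 1, so $|S|\le\alpha(G_K)$. Symmetrically, if $S^+=\varnothing$, then $|S|\le\alpha(G_{K^\vee})$.

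Otherwise $S$ contains some $i$ and some $\bar j$. By item 3 they can be non-adjacent only if $i=j$. Applying this to every mixed pair in $S$ forces $S^+=\{i\}$ and $S^-=\{i\}$ for a single label $i$. Indeed, a second unbarred vertex $k\neq i$ would be adjacent to $\bar i$, and a second barred vertex $\bar k$ with $k\ne i$ would be adjacent to $i$. Hence $S=\{i,\bar i\}$, so $|S|=2=\varepsilon(K)$, and this case occurs only when both $i$ and $\bar i$ are actual vertices. Combining the cases gives the reverse inequality.

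For the ``in particular'' statement: if all $2m$ labels are actual vertices, then some pair $i,\bar i$ exists, so $\varepsilon(K)=2$. Note that $m\ge1$, since $E\notin K$ excludes $m=0$. The displayed formula follows.

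The only real obstacle is bookkeeping about ghost labels. Items 1–4 apply only to actual vertices, so the definitions of $G_K$, $G_{K^\vee}$ and $\varepsilon(K)$ must all be restricted to actual vertices, as the statement does. No step beyond this case analysis is needed.
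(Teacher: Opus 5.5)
Your proof is correct and follows essentially the same case analysis as the paper. A one-shore independent set is an independent set of $G_K$ or $G_{K^\vee}$, and by item 3 of Proposition~\ref{prop:bier-edges} a set using both shores collapses to a matched pair $\{i,\bar i\}$. Beyond the paper's sketch, you also write out the lower-bound direction explicitly and check that $m\ge 1$ for the ``in particular'' clause.
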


\begin{proof}
An independent set contained in one shore is exactly an independent set in
$G_K$ or in $G_{K^\vee}$.  If an independent set uses both shores, then every
unbarred label and every barred label in it must have the same underlying
index.  Such a mixed independent set therefore has at most two vertices and
exists with two vertices precisely when some pair $\{i,\bar i\}$ occurs.
\end{proof}

\begin{example}\label{ex:ten-vertices}
The number of vertices alone does not determine the independence number.
Consider complexes on $E=[5]$.
\begin{enumerate}
    \item If $K=\{A\subseteq[5]:|A|\le 2\}$,
    then $K=K^\vee$.  Both shore are complete graphs, and the Bier graph is
    $K_{10}$ with the five pairs $\{i,\bar i\}$ removed.  Hence
    $\alpha\bigl(\Bier(K)\bigr)=2.$

    \item If $K=\{A\subseteq[5]:|A|\le 1\}$,
    then the five unbarred vertices are mutually nonadjacent.  All ten
    vertices occur, and $\alpha\bigl(\Bier(K)\bigr)=5.$
\end{enumerate}
Thus two Bier spheres on ten vertices can have independence numbers $2$ and
$5$, respectively.
\end{example}

Independence complexes $I(K)$ are very important in topological combinatorics \cite{Bjorner1995TopologicalMethods,Kozlov2008} and have been intensely studied (see e.g., \cite{EhrenborgHetyei2006} and references therein).
The relationship between the independence complexes of $K$ and those of its Bier sphere can be summarized as follows: Independent sets of the $\Bier (K)$, using only positive vertices (positive shore) are precisely copies of the independent sets of $K$: $\{A^+:A\in\mathcal I(K)\}.$ 
Independent sets of $\Bier (K)$, using only negative vertices (barred shore) are precisely the complements of independent sets of $K^\vee$:
    $\{\overline A:A\in\mathcal I(K^\vee)\}.$
 An independent set using both positive and negative vertices can only be a matched pair  $\{i^+,\bar i\}.$ This last assertion follows because
$\{i^+,\bar j\}$ is an edge of the Bier sphere whenever $i\neq j$, whereas 
$\{i^+,\bar i\}$ is not an edge. Therefore,

\[
\mathcal I(B(K))
=
\underbrace{\mathcal I(K)^+}_{\text{positive vertices}}
\cup
\underbrace{\overline{\mathcal I(K^\vee)}}_{\text{negative vertices}}
\cup
\underbrace{\bigcup_i 2^{\{i^+,\bar i\}}}_{\text{matched independent pairs}},
\]

where the union over $i$ includes only indices for which both vertices $i^+$ and $\bar{i}$ actually occur.

The Bier construction converts low-dimensional nonfaces of $K$ into
high-dimensional faces of its Alexander dual.  In particular, if $A$ is an
independent set in the unbarred shore, then for every distinct $i,j\in A$,
\[
    \{i,j\}\notin K
    \quad\Longleftrightarrow\quad
    E\setminus\{i,j\}\in K^\vee.
\]
Because $i$ and $j$ are actual vertices of $K$, the missing edge
$\{i,j\}$ is a minimal nonface of $K$.  Therefore,
$E\setminus\{i,j\}$ is in fact a facet of $K^\vee$.  Thus, an independent set
of size $r$ on one shore produces $\binom{r}{2}$ explicitly structured
$(m-3)$-dimensional facets on the Alexander-dual shore.  The analogous
statement holds with $K$ and $K^\vee$ interchanged.

\subsection{The largest independence number of a Bier sphere, proof of Theorem \ref{thm:maximum-alpha}}

\begin{proof}[of Theorem \ref{thm:maximum-alpha}]
We use $N$ for the number of actual vertices of the Bier sphere, thereby
distinguishing it from the size $m$ of the ground set of the original input simplicial complex. 

Let $I$ be an independent set of cardinality $r$.  If $r\le 2$, the desired
inequality is immediate for $N\ge4$; the case $N=3$ is the triangular
$1$-sphere and has independence number one.

Suppose now that $r\ge3$.  Proposition~\ref{prop:bier-edges} implies that $I$
lies entirely on one shore.  Assume first that
\[
    I=\{i_1,\dots,i_r\}
\]
lies on the unbarred shore.  Then $\{i_a,i_b\}\notin K$ whenever $a\neq b$.
For every $j\in E$, choose two distinct elements $i_a,i_b\in I\setminus\{j\}$;
this is possible because $r\ge3$.  If $E\setminus\{j\}$ belonged to $K$, then
its subset $\{i_a,i_b\}$ would also belong to $K$, a contradiction.  Hence
\[
    E\setminus\{j\}\notin K
\]
for every $j\in E$.  Equivalently, all $m$ barred vertices occur.  Since the
$r$ vertices of $I$ also occur and $m\ge r$, we obtain
\[
    N\ge m+r\ge 2r.
\]
Thus $r\le\lfloor N/2\rfloor$.  The argument for an independent set on the
barred shore is identical after interchanging $K$ and $K^\vee$.

It remains to show sharpness.  We give explicit constructions.

If $N=2m$ with $m\ge3$, take $K$ to be the zero-dimensional complex on $[m]$:
\[
    K=\{\varnothing,\{1\},\dots,\{m\}\}.
\]
All $2m$ Bier vertices occur, and the $m$ unbarred vertices form an independent
set.  Therefore $\alpha(\Bier(K))=m=N/2$.

If $N=2m-1$ with $m\ge3$, work on the ground set $E=[m]$ and take
\[
    K=\{A\subseteq[m]:|A|\le m-2\}
      \cup\bigl\{[m-1]\bigr\}.
\]
Its Alexander dual is
\[
    K^\vee
      =\bigl\{\varnothing,\{1\},\dots,\{m-1\}\bigr\}.
\]
Thus all $m$ unbarred vertices occur, exactly $m-1$ barred vertices occur,
and those barred vertices form an independent set.  It follows that
\[
    \alpha\bigl(\Bier(K)\bigr)
      =m-1=\left\lfloor\frac{N}{2}\right\rfloor.
\]

Finally, the triangular Bier sphere gives equality for $N=3$.  For $N=4$,
take $K$ on $\{1,2,3\}$ with facets $\{1,2\}$ and $\{1,3\}$.  Its Bier graph
is a $4$-cycle and hence has independence number two.
\end{proof}

\begin{remark}
The zero-dimensional Bier sphere $S^0$ has two isolated vertices, so its
independence number is $2$.  It is the exception to the formula in
Theorem~\ref{thm:maximum-alpha} when $N=2$.
\end{remark}

\begin{corollary}
If the original simplicial complex $K$ has ground set of size $m$ and all $2m$ Bier vertices occur, then
\[
    \alpha\bigl(\Bier(K)\bigr)\le m,
\]
and this is best possible.
\end{corollary}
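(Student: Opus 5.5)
The corollary is a specialization of Theorem~\ref{thm:maximum-alpha}, so the plan is to deduce the upper bound directly from it, using only the vertex count, and then exhibit an example showing equality.

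\emph{Upper bound.} Suppose all $2m$ labels $1,\dots,m,\bar1,\dots,\bar m$ are actual vertices of $\Bier(K)$. Then $N=2m$. Since $m\ge2$, we have $N\ge4\ge3$, and Theorem~\ref{thm:maximum-alpha} gives
\[
\alpha\bigl(\Bier(K)\bigr)\le\left\lfloor\frac{2m}{2}\right\rfloor=m.
\]
The degenerate case $m=1$ is excluded by the hypothesis. If $K=\{\varnothing\}$ and $E=\{1\}$, the label $1$ is not a vertex, so not all $2m$ labels occur.

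As a cross-check, the bound can also be read off from Proposition~\ref{prop:alpha-formula}. Each shore has at most $m$ vertices, so $\alpha(G_K)\le m$ and $\alpha(G_{K^\vee})\le m$. The remaining term $\varepsilon(K)=2$ is at most $m$ whenever $m\ge2$. This gives the same inequality without appealing to the counting argument in the theorem's proof.

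\emph{Sharpness.} Take the construction already used in the even case of the proof of Theorem~\ref{thm:maximum-alpha}: let $K$ be the zero-dimensional complex on $[m]$, with $m\ge3$.
\begin{itemize}
\item Every label occurs. Each singleton $\{i\}$ lies in $K$, and $E\setminus\{i\}\notin K$ because it has at least two elements.
\item The $m$ unbarred vertices are pairwise nonadjacent, since $K$ has no edges. They therefore form an independent set of size $m$, so equality holds.
\end{itemize}
For $m=2$, the same $K$ gives the $4$-cycle, which also attains $m=2$.

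There is no real obstacle here. The only care needed is checking that the small cases $m\le2$ still fall within the range $N\ge3$ of Theorem~\ref{thm:maximum-alpha}, or else handling them through Proposition~\ref{prop:alpha-formula}.
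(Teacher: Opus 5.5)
Your argument is correct and is essentially the paper's: the corollary is Theorem~\ref{thm:maximum-alpha} specialized to $N=2m$, and sharpness comes from the zero-dimensional complex on $[m]$ used in the even case of that proof. One aside is false but harmless: for $m=2$ the hypothesis can never hold, because for $i\neq j$ the label $\bar j$ occurs iff $\{i\}\notin K$, so exactly one of $i,\bar j$ occurs and $\Bier(K)$ is $S^0$; the zero-dimensional complex on $[2]$ therefore gives two isolated vertices rather than a $4$-cycle, the hypothesis in fact forces $m\ge3$, and your $m=2$ sentence should simply be dropped.
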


\section{Concluding remarks}

For $d=4,5$, set $\delta_d(n)=n-\alpha(d,n)$. Our results give
\[
\sqrt{12}\leq
\liminf_{n\to\infty}\frac{\delta_d(n)}{\sqrt n}
\leq
\limsup_{n\to\infty}\frac{\delta_d(n)}{\sqrt n}
\leq4.
\]
Proposition~\ref{prop:exact-construction} shows that the constructed family has
deficit asymptotic to four times the square root of its number of vertices. We
conjecture that this family is asymptotically extremal.

\begin{conjecture}\label{conj:sharp}
	For $d=4,5$,
	\[
	\lim_{n\to\infty}\frac{n-\alpha(d,n)}{\sqrt n}=4.
	\]
\end{conjecture}

Appendix~\ref{app:computational} gives a certified flag 3-sphere on 21 vertices
with independence number 9, exceeding the value $\lfloor n/2\rfloor-2$
predicted in \cite{CNevo}.

For $d\geq6$, even the order of the deficit is undetermined. The construction gives
$n-\alpha(d,n)\leq4\sqrt n+O(1)$, whereas the only upper bound on $\alpha(d,n)$
available to us is the one inherited from $\beta(d,n)$ in Section~3, namely
$n-\alpha(d,n)=\Omega\!\left(n^{1/\lfloor d/2\rfloor}\right)$. These two orders first
fail to meet at $d=6$, which we regard as the central open case.

\section{Acknowledgements}
\noindent\textbf{The role of AI in this proof.}
GPT-5.5 Pro, accessed through the ChatGPT web interface, was used as a sounding
board in the early stages of this project and suggested the initial idea behind
the results in Section~4. Building on this suggestion, the authors independently
developed the more general results in Section~3 and refined the argument into
the form presented here. The final proofs were written and verified by the authors. 

Jes\'us De Loera was partially supported by grants NSF DMS-2434665 and NSF DMS-2348578. The research of Junwei Lu was partially supported by NSF DMS-2434664. Hailun Zheng was partially supported by NSF grant DMS-2535689.
\appendix
\section{Computational evidence}\label{app:computational}

\noindent\emph{An explicit example.}\quad
There is a flag $3$-sphere $\Lambda$ on $21$ vertices with
$f(\Lambda)=(21,93,144,72)$ and $\alpha(\Lambda)=9$. As
$9>8=\lfloor(21-4)/2\rfloor$, it refutes the prediction
$\alpha(4,n)=\lfloor n/2\rfloor-2$ of \cite{CNevo}. The value
$\alpha(\Lambda)=9$ is certified by an independent set of size $9$ and a
partition of $V(\Lambda)$ into $9$ cliques. A machine-readable certificate
records $22$ edge subdivisions and $9$ admissible edge contractions connecting
$\Lambda$ to the boundary of the $4$-dimensional cross-polytope. These moves
preserve flag PL spheres \cite{LutzNevo}. The ancillary file
\texttt{verify\_n21.py} contains the certificate and a self-contained verifier.
It uses only the Python standard library and may be run with
\texttt{python3 verify\_n21.py}.

	{\small
	\bibliography{paper-refs}
	\bibliographystyle{plain}
}
\end{document}